\pgfplotsset{compat=1.3}
\newcounter{common}[section]
\numberwithin{equation}{section}
\newcommand{\nz}{\mathbb{N}}
\newcommand{\rz}{\mathbb{R}}
\newcommand{\intO}{\int_\Omega}
\def\cringle{\mathaccent"7017 }
\newcommand{\operator}{{\cal F}}
\newcommand{\Lltwo}{\boldsymbol{L}^2}
\newcommand{\Vv}{\mathbf{V}}
\newcommand{\Vvh}{\mathbf{V}_h}
\newcommand{\Wpone}{\cringle{W}^1_p(\Omega)}
\newcommand{\Wponep}{\cringle{W}^1_{p\dual}(\Omega)}
\newcommand{\Wptwo}{W^2_p(\Omega)}
\newcommand{\keff}{k(\phi)}
\newcommand{\muu}{\mu(\phi)}
\newcommand{\phid}{\phi_D}
\newcommand{\uu}{\mathbf{u}}
\newcommand{\vv}{\mathbf{v}}
\newcommand{\ww}{\mathbf{w}}
\newcommand{\ee}{\mathbf{e}}
\newcommand{\jjp}{\mathbf{j}}
\newcommand{\jjR}{\mathbf{j}_R}
\newcommand{\jj}{\mathbf{j}}
\newcommand{\gb}{\mathbf{g}}
\newcommand{\real}{\ensuremath{\mathbb{R}}}
\newcommand{\bit}{\begin{itemize}}
\newcommand{\eit}{\end{itemize}}
\newcommand{\beq}{\begin{equation}}
\newcommand{\eeq}{\end{equation}}
\newcommand{\dual}{'}
\newcommand*\Div{\nabla\hskip -2pt \cdot}
\newcommand{\Rey}{\ensuremath\mathrm{R\hskip -1 pt e}}
\newcommand{\Sc}{\ensuremath\mathrm{S\hskip -1 pt c}}
\newcommand{\Pra}{\ensuremath\mathrm{P\hskip -1 pt r}}
\newcommand{\Le}{\ensuremath\mathrm{L\hskip -1 pt e}}
\newcommand{\NBT}{N_{BT}}
\newcommand{\Scf}{\ensuremath\mathrm{S\hskip -1 pt c \hskip -1 pt}_f}
\newtheorem{lemma}{Lemma}[section]
\newtheorem{proposition}[lemma]{Proposition}
\newtheorem{theorem}[lemma]{Theorem}
\newtheorem{corollary}[lemma]{Corollary}
\newtheorem{remark}[lemma]{Remark}
\newtheorem{example}[lemma]{Example}
\tikzset{%
    body/.style={inner sep=0pt,outer sep=0pt,shape=rectangle,draw,thick,pattern=north east lines wide},
    dimen/.style={<->,>=latex,thin,every rectangle node/.style={fill=white,midway,font=\sffamily}},
    symmetry/.style={dashed,thin},
}
\newcommand\getcurrentref[1]{%
 \ifnumequal{\value{#1}}{0}
  {??}
  {\the\value{#1}}%
}
    \numberwithin{common}{section}
    \numberwithin{common}{section}
\renewenvironment{proof}[1][\proofname]{\par
  \pushQED{\qed}%
  \normalfont \topsep6\p@\@plus6\p@\relax
  \trivlist
  \item[\hskip\labelsep
         \underline{\textbf{#1:}}]
}{
  \popQED\endtrivlist\@endpefalse
}
    \numberwithin{common}{section}
    \numberwithin{common}{section}
\newcommand{\Cle}{\lesssim}
\newcommand{\Cleq}{\Cle}
\newcommand{\leC}{\Cle}
\newcommand{\leqC}{\Cle}
\begin{document}

\title{Convective transport in nanofluids: regularity of solutions and \\error estimates
for finite element approximations}

\author{Eberhard B\"ansch and Pedro Morin}

\date{\today}

\maketitle 

\begin{abstract}
We study the stationary version of a thermodynamically consistent variant of
the Buongiorno model describing convective transport in nanofluids. Under some smallness
assumptions it is proved that there exist regular solutions. Based on this regularity
result, error estimates, both in the natural norm as well as in weaker norms for
finite element approximations can be shown. The proofs are based on the
theory developed by Caloz and Rappaz for general nonlinear, smooth problems.
Computational results confirm the theoretical findings.
\end{abstract}

{\bf Keywords:} Nanofluid, thermophoresis, heat transfer,
weak solution, regularity, $L_p$ estimates, finite elements,
error estimates.
    
\section{Introduction}\label{Sec:intro}

Nanofluids, i.e. a dilute mixture of a conventional base fluid
and particles of submicron size, have received much attention for
instance as a cooling liquid. This is due to their superior heat
transfer properties. The enhanced heat transfer cannot solely be understood
by the altered heat conducting coefficients of the mixture, but
rather by effects of a heterogeneous distribution of the particles. 
Among the mathematical models to explain such behavior, the
Buongiorno model \cite{Buon:06} has become rather popular. By now,
many simulations are based on this model, see  
\cite{AbbasEtAl:16,AnbuchEtAl:12,HeEtAl:09,NoghEtAl:15,SayyarSaghafian:17, VanakiEtAl:16} 
for a by far not complete list of applications.
In \cite{Bae:18} the mechanism of the enhanced heat transfer for laminar
flow conditions was revealed: strong temperature gradients at a hot wall lead
to reduction of concentration of particles by thermophoresis
there and this in turn reduces
the concentration dependent viscosity of the dispersion. This then alters
the flow profile leading to a stronger convective heat transfer.

To the best of our knowledge, despite its relevance in applications,
there is hardly any rigorous mathematical analysis of the Buongiorno model. 
In \cite{Bae:18} existence of weak solutions was shown using energy techniques.
It was shown that solutions of a decoupled semi-implicit time-discretization 
converge to a solution of the continuous system, thereby also suggesting 
an effective numerical method. 

In \cite{BaeFM:19} existence of solutions to the stationary system was shown.
Interestingly, the proof is somewhat technically more demanding than for
the time-dependent problem.

The objective of the present work is to first show (under some smallness
assumptions) regularity for the stationary problem and then use these
regularity results to prove quasi-optimal error estimates for finite
element approximations of the system. To this end it is shown that the
system can be cast into the general framework of Caloz and Rappaz 
\cite{Rappaz:97} for nonlinear (smooth) problems.

It turns out that the right space for the scalar quantities concentration
and temperature is $W^1_p$ with $p>d$, $d$ the space dimension, 
whereas for the fluid part we can stay
in a Hilbert space setting. 

The rest of the paper is organized as follows.
In Section~\ref{Sec:model} we present the mathematical model and 
set some notation.
In Section~\ref{Sec:regularity} we present the regularity results for the 
solutions of the system of PDEs.
In Section~\ref{Sec:linear} we present a linearization of the problem 
which will allow us to prove the error estimates for a
finite element discretization in Section~\ref{Sec:fe}.
We close this article with some numerical experiments in
Section~\ref{Sec:comp} where we illustrate the orders of convergence, 
as well as the interesting effect of thermophoresis as a means 
to enhance the heat transfer properties.

\section{The mathematical model}\label{Sec:model}

We consider the stationary system of a variant of the 
four equations, two-phase Buongiorno model \cite{Buon:06} describing the motion of
a nanofluid including concentration transport by thermophoresis. The model
has been slightly  modified to make it thermodynamically consistent, 
see \cite{Bae:18,BaeFM:19}.  In non-dimensional form it reads as follows: Let $\Omega\subseteq \rz^d$,
$d\in\{2,3\}$ be an open, bounded domain with $C^2$ boundary. We look for a concentration field
$\phi$, a temperature $T$ as well as a velocity $\uu$ and a pressure $p$ fulfilling the following system of equations in $\Omega$
(in the distributional sense)
\newcommand{\jp}{\jj}
\begin{equation}\label{Eq:nondim}
\begin{split}
\uu\cdot\nabla \phi 
       + \frac{1}{\Rey\,\Sc}\nabla \cdot \jp &=0, \\[5pt]
 \uu\cdot\nabla (\eta T)
+ \frac{1}{\Rey\,\Pra\,\Le}\nabla\cdot (T\jp) 
      - \frac{1}{\Rey\,\Pr}\nabla\cdot (\keff\nabla T)&=f,\\[5pt]
\uu\cdot\nabla (\rho \uu)
+ \frac{1}{\Rey\,\Scf}\nabla\cdot(\uu \otimes \jp)  
    -\frac{1}{\Rey}\nabla\cdot(\muu D(\uu)) +\nabla p + \beta T \ee_g &=\gb,
    \\
    \Div \uu &= 0,
\end{split}
\end{equation}
with $D(\uu) = \nabla \uu + \nabla \uu^T$ and the particles' flux given by
\begin{equation}
\jp := -\Big(\nabla\phi + \phi(1-\phi)\frac{1}{{\NBT}} \frac{\nabla T}{T_0}\Big),
\end{equation}
with
$\NBT$ the ratio of Brownian diffusivity/thermophoretic diffusivity, 
$T_0$ a non-dimensional ambient temperature,
$\Rey$ the Reynolds number, $\Pr$ the Prandtl number, $\Sc, \Scf$ the Schmidt and
fluid Schmidt number, respectively and $\Le$,  the Lewis number.
Buoyancy effects through a Boussinesq approximation are considered with $\beta>0$ and $\ee_g$ denoting a unit vector in the direction of gravity.
The above system must of course be supplemented by appropriate boundary conditions.

The flux $\jp$ is the non convective slip flux consisting on a Brownian part $-\nabla\phi$
and the so called thermophoretic part $\phi(1-\phi)\frac{1}{{\NBT}} \frac{\nabla T}{T_0}$
that drives particles from hot to cold. Phenomenologically this can be explained by
the fact that collisions of the particles with molecules from the base fluid are stronger on
the hot side of the particle than on the cold part, resulting in a net flux from hot to cold.
For more on thermophoresis we refer for instance to \cite{McNab:73}.

The mathematical challenge with the above system lies in the rather strong
nonlinearity. The right space for $\phi, T$ is therefore $W^1_p(\Omega)$ with
$p >d$. However, an energy estimate is only available in $H^1(\Omega)$, see
\cite{Bae:18,BaeFM:19}. To overcome this problem, in Section \ref{Sec:regularity} we prove 
regularity estimates in $W^2_p(\Omega)$ based on some bootstrap arguments and a smallness assumption.

This paves the way to cast the problem in the general framework 
developed in \cite{Rappaz:97} for nonlinear problems. 

\paragraph{Notation.}
As usual, Lebesgue spaces are denoted by $L_p(\Omega)$, $1\leq p \leq \infty$
and Sobolev spaces by $W^m_p(\Omega)$, $m\in\nz_0$. If $p=2$, the
notation $H^m(\Omega)$ is used. 
In what follows, scalar quantities will be denoted by normal characters, whereas
vector and tensor valued functions will be denoted by bold characters. Consequently, for instance
${\mathbf L}^p(\Omega) := L^p(\Omega)^d$. Define $\Vv :=\{ \vv\in \mathbf{H}_0^{1,2}(\Omega)
\;|\; \Div \vv =0\}$, where $H_0^{1,2}(\Omega)$ is the closure of $C^\infty_0(\Omega)$ (the space
of test functions) in $H^{1,2}(\Omega)$, as well as $\tilde{\Vv}:=\Vv\cap \mathbf{H}^{3,2}(\Omega)$
with corresponding norm. 
As usual, the pressure space is chosen to be $L_{2,0}(\Omega) := \{ q\in L_2(\Omega) \;|\;
\intO q(x) dx = 0\}.$
The expression $A \Cle B$ will denote $A \le C B$ with a constant $C$ that might depend on the dimension $d$ of the underlying space and also on the norms involved in the expressions $A$ and $B$.

\section{Regularity}\label{Sec:regularity}

Since in the next sections we are concerned with analytical problems, we set all
non-dimensional constants to one for ease of presentation. 
Let $f \in L_p(\Omega)$, $p>d$, $\gb \in \Lltwo(\Omega)$ and
$\jjp:= -\nabla\phi - h(\phi)\nabla T$, where $h(s) = s(1-s)$. 
Now the problem reads (in the distributional sense): Find $\phi, T,\uu, p$
such that
\begin{equation}\label{eqn}
\begin{split}
-\Delta \phi &= \nabla\cdot (h(\phi) \nabla T )- \uu \cdot \nabla \phi  
       \quad \text{in } \Omega, \\[8pt]
-\nabla\cdot(\keff\nabla T) &= f 
        -\nabla \cdot (T\jjp) -\uu \cdot \nabla (\eta T )    \quad\text{in }\Omega, \\[8pt]
- \nabla\cdot(\muu D(\uu)) 
      +\nabla p&=\gb  -\nabla \cdot (\uu \otimes \jjp) - \uu\cdot\nabla (\rho \uu) - T \ee_g
     \quad\text{in }\Omega,\\
\nabla \cdot \uu &=0 \quad\text{in }\Omega.
\end{split}
\end{equation}
For the boundary conditions we choose
$$
  \phi = \phi_D,\quad 
  T = 0, \quad 
   \uu = 0 \qquad\text{on }\partial\Omega.
$$

In the above equations, $\eta = 1 +\phi$, $\rho = 1 + \phi$ and
we assume $\phid\in\Wptwo$ with $0\leq \phid \leq 1$ and $\Delta\phid=0$.

The coefficients $\keff, \muu$ fulfill $k(\cdot), \mu(\cdot)\in C^2([0,1])$ and
$$
0<k_0 \leq k(s),\quad 0<\mu_0 \leq \mu(s) \quad\text{for all } s\in [0,1].
$$
We also denote by $h(\cdot),k(\cdot),\mu(\cdot)$ their extensions to $C^2(\rz)$ satisfying
$$
0<\tilde{k}_0 \leq k(s),\quad 0<\tilde{\mu}_0 \leq \mu(s) \quad\text{for all } s\in\rz
$$
and
$$
|D^\ell h(s)| \leq C, \quad |D^\ell k(s)| \leq C \quad \text{for } \ell=0,1,2
\quad\text{and all } s\in\rz.
$$

We define a vector-valued cut-off function, for $R>0$ as follows
\[
\sigma_R : \real^d \to \real^d, \qquad 
\sigma_R(y) = \begin{cases}
y, \quad &\text{if } |y| \le R, \\
\frac y{|y|}R , \quad &\text{if } |y| > R.
\end{cases}
\]
Note that $\sigma_R$ is Lipschitz and 
\[
\partial_j \big(\sigma_R(y)_i\big) = 
\begin{cases}
\delta_{ij}, \quad &\text{if } |y| \le R, \\
\frac R{|y|}\Big(\delta_{ij}-\frac{y_iy_j}{|y|^2}\Big), \quad &\text{if } |y| > R.
\end{cases}
\]

\newcommand{\rhs}{R\!H\!S}
\newcommand{\rhsT}{\tilde f_R}
\newcommand{\rhsu}{\tilde \gb_R}
\newcommand{\rhsphi}{\rhs(\phi)}

For $R>0$ we now consider the regularized problem
\begin{subequations}\label{eqnR}
\begin{align}\label{eqnR:phi}
\begin{split}
-\Delta \phi_R &= \nabla\cdot \sigma_R(h(\phi_R) \nabla T_R )- 
         \uu_R \cdot \nabla \phi_R  
       \quad \text{in } \Omega, \\
        \phi_R &= \phi_D \quad\text{on }\partial\Omega,
\end{split}
\\
\label{eqnR:T}
\begin{split}
-\nabla\cdot(k(\phi_R))\nabla T_R) &= 
\underbrace{f -\nabla \cdot \Big(T_R\underbrace{\big( -\sigma_R \big( h(\phi_R) \nabla T_R \big)
- \nabla \phi_R\big)}_{=:\jjR}\Big) -\uu_R \cdot \nabla (\eta_R T_R )}_{\rhsT}    \quad\text{in }\Omega, \\
           T_R &= 0 \quad\text{on }\partial\Omega, 
\end{split}
\\ 
\label{eqnR:u}
\begin{split}
- \nabla\cdot(\mu(\phi_R) D(\uu_R)) 
      +\nabla p_R&=\underbrace{\gb  -\nabla \cdot (\uu_R \otimes \jjR)
- \uu_R\cdot\nabla (\rho_R \uu_R) -T \ee_g}_{\rhsu} \quad\text{in }\Omega,\\
\nabla \cdot \uu_R &=0 \quad\text{in }\Omega,\\
           \uu_R &= 0 \quad\text{on }\partial\Omega.
\end{split}
\end{align}
\end{subequations}
Existence of solutions $\phi_R$, $T_R \in H^{1,2}(\Omega), 
\uu_R\in \Vv:=\{ \vv\in \mathbf{H}_0^{1,2}(\Omega)
\;|\; \Div \vv =0\}$ can be proved as in \cite{BaeFM:19} 
obtaining also the a priori estimate 
\begin{equation}\label{uniformbound}
\| \phi_R \|_{1,2} + \|T_R\|_{1,2} + \|\uu_R\|_{\Vv}\le 
C (\|\phid\|_{2,2} + \| f \|_{0,2} +\| \gb\|_{0,2})
\end{equation}
with $C$ independent of $R$ and $0\le \phi_R \le 1$, a.e.

The following lemma will be instrumental in proving our regularity results.

\begin{lemma}\label{L:variablecoefficients}
Let $j\in\nz_0$ and let 
$k(\cdot), \mu(\cdot)\in C^{j+1}(\rz)$ 
with all derivatives up to order $j+1$ bounded and $\partial\Omega$ of class $C^{j+2}$.
Furthermore, let $r> d$ and $1<M,M\dual < r$, with $M\dual = M/(M-1)$,
the Lebesgue dual exponent to $M$. Let $\tilde{f}\in W^j_M(\Omega)$, 
$\tilde{\gb}\in \mathbf{W}^j_M(\Omega)$
and $\phi\in\W^{1+j}_r(\Omega)$. If $T\in \cringle{H}^1(\Omega),\uu \in\Vv$ fulfill
\begin{align}
\int_\Omega \keff\nabla T\cdot \nabla\psi  &= \int_\Omega \tilde{f}\psi, & &\text{for all }\psi 
       \in C^\infty_0(\Omega),   \label{Eq:weakT}
       \\
\int_\Omega \frac{\muu}{2} D(\uu):D(\vv)  &= \int_\Omega \tilde{\gb}\cdot\vv, & &\text{for all }\vv\in{\cal D}(\Omega)
    = \{\vv\in (C^\infty_0)^d(\Omega)\,|\, \Div\vv=0\}, \label{Eq:weakU}
\end{align}
then $T,\uu \in W^{2+j}_M(\Omega)$ and
$$ \|T\|_{2+j,M} \leq C_M \|\tilde{f}\|_{W^j_M(\Omega)},\quad
 \|\uu\|_{2+j,M} \leq C_M \|\tilde{\gb}\|_{W^j_M(\Omega)},
$$
where $C_M=C_M(\|\phi\|_{1+j,r})$ is a non-decreasing function of
$\|\phi\|_{1+j,r}$.
\end{lemma}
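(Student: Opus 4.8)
The plan is to prove the statement by induction on $j$, treating the scalar equation \eqref{Eq:weakT} and the Stokes-type system \eqref{Eq:weakU} by the same two-step scheme: first establish membership in $W^{2+j}_M(\Omega)$ by a bootstrap in the integrability, then read off the quantitative bound from the linearity of each problem in the pair $(T,\tilde f)$, respectively $(\uu,\tilde{\gb})$. The starting observation is that $\phi\in W^{1+j}_r(\Omega)$ with $r>d$ embeds into $C^{0,1-d/r}(\overline{\Omega})$ by Morrey's inequality, so $k(\phi)$ and $\mu(\phi)$ are H\"older continuous, bounded below by $\tilde k_0,\tilde\mu_0>0$, and---using $k,\mu\in C^{j+1}$ with bounded derivatives together with the chain rule and Moser-type composition estimates---satisfy $\|k(\phi)\|_{W^{1+j}_r}+\|\mu(\phi)\|_{W^{1+j}_r}\le\Phi(\|\phi\|_{1+j,r})$ for a fixed non-decreasing $\Phi$. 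This is what makes every constant below a non-decreasing function of $\|\phi\|_{1+j,r}$.

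For the base case $j=0$ of \eqref{Eq:weakT} I would rewrite the equation in non-divergence form, $-\Delta T=\frac{1}{k(\phi)}\bigl(\tilde f+k'(\phi)\nabla\phi\cdot\nabla T\bigr)$, and invoke the $L_q$ Calder\'on--Zygmund estimate for the Dirichlet problem on the $C^2$ domain $\Omega$ (equivalently, the Agmon--Douglis--Nirenberg / Di Fazio estimate for $-\Div(k(\phi)\nabla\,\cdot\,)$ with continuous leading coefficient), valid for every $1<q<\infty$. The coupling term is controlled by a bootstrap: from the hypothesis $T\in\cringle{H}^1(\Omega)$ we have $\nabla T\in L^2$, hence $\nabla\phi\cdot\nabla T\in L^{q_0}$ with $1/q_0=1/r+1/2$ by H\"older; the estimate then gives $T\in W^2_{q_0}$ and $\nabla T\in W^1_{q_0}\hookrightarrow L^{s_0}$ with $1/s_0=1/q_0-1/d$ (any finite $s_0$ once $q_0\ge d$). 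Iterating, the reciprocal exponent drops by $1/d$ at each step, so after finitely many steps $\nabla T\in L^{s}$ with $1/s=1/M-1/r$, which is positive precisely because $M<r$; then $\nabla\phi\cdot\nabla T\in L^M$ and the estimate yields $T\in W^2_M$. To obtain $\|T\|_{2,M}\le C_M\|\tilde f\|_{0,M}$ I would first test \eqref{Eq:weakT} with $T$ and use coercivity ($k(\phi)\ge\tilde k_0$) together with the embedding $L^M\hookrightarrow W^{-1}_M=(W^1_{M'})'$ to get $\|\nabla T\|_{L^2}\lesssim\|\tilde f\|_{0,M}$, and then propagate this through the finitely many bootstrap steps, whose constants are all non-decreasing in $\|\phi\|_{1,r}$.

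The Stokes system \eqref{Eq:weakU} is handled analogously: since $\Div\uu=0$ one has $\Div(\mu(\phi)D(\uu))=\mu(\phi)\Delta\uu+\mu'(\phi)\,D(\uu)\nabla\phi$, so \eqref{Eq:weakU} becomes a Stokes problem with continuous viscosity, $-\mu(\phi)\Delta\uu+\nabla p=\tilde{\gb}+\mu'(\phi)\,D(\uu)\nabla\phi$, $\Div\uu=0$. Freezing the coefficient by a partition of unity (legitimate since $\mu(\phi)$ is continuous) and invoking the $L_M$ Stokes estimates of Cattabriga / Amrouche--Girault, $\|\uu\|_{2,M}+\|p\|_{1,M}\lesssim\|\text{RHS}\|_{0,M}$, I would run the identical integrability bootstrap on $\mu'(\phi)D(\uu)\nabla\phi$ starting from $\nabla\uu\in L^2$ (given $\uu\in\Vv$), reaching $L^M$ since $M<r$, and use Korn's inequality for the energy bound. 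For the inductive step $j-1\to j$ I would assume the result at level $j-1$, giving $T,\uu\in W^{1+j}_M$, differentiate the non-divergence forms, and observe that the right-hand sides $\frac{1}{k(\phi)}(\tilde f+k'(\phi)\nabla\phi\cdot\nabla T)$ and $\tilde{\gb}+\mu'(\phi)D(\uu)\nabla\phi$ now lie in $W^j_M$ by the Sobolev product estimate $W^{j}_r(\Omega)\cdot W^{j}_M(\Omega)\hookrightarrow W^{j}_M(\Omega)$ (with reciprocal and composition estimates), after which the base-case regularity at order $j$ delivers $T,\uu\in W^{2+j}_M$.

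I expect the main obstacle to be exactly this product/bootstrap bookkeeping: verifying that the bilinear coupling terms $\nabla\phi\cdot\nabla T$ and $D(\uu)\nabla\phi$---and at higher order their $W^j_M$ analogues---genuinely land in $W^j_M$, and that the integrability reached by the bootstrap matches the target exponent $M$. This is where the hypotheses are consumed: $r>d$ yields continuity of the coefficients (hence applicability of the $L_M$ elliptic and Stokes theory) and the multiplier property of $W^{1+j}_r$ on $W^{j}_M$, while $1<M,M'<r$ align the H\"older and duality exponents so that both the products and their dual pairings stay in range. A secondary care is making the energy estimate rigorous for small $M$ by pairing through $W^{-1}_M=(W^1_{M'})'$ rather than $H^{-1}$, and tracking throughout that no constant depends on $T,\uu,\tilde f,\tilde{\gb}$ and that each is a non-decreasing function of $\|\phi\|_{1+j,r}$.
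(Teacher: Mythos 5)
Your plan takes a genuinely different route from the paper's: the paper proves this lemma in three lines, by citing the proof of \cite[Lemma 4]{Abels:09} (an $H^2$-regularity result for the Stokes system with $W^1_r$-coefficients, $r>d$) and remarking that a closer inspection of that proof gives the assertion for general $M$ and arbitrary $j$, the temperature equation being analogous but simpler. Your self-contained scheme -- H\"older continuity of $k(\phi),\mu(\phi)$ via Morrey, Calder\'on--Zygmund and Cattabriga/Amrouche--Girault estimates with frozen coefficients, an integrability bootstrap, induction on $j$ -- is a legitimate alternative, and the bootstrap for the \emph{membership} statement $T,\uu\in W^{2+j}_M(\Omega)$ is essentially sound: since $M<r$, the gain of $1/d-1/r>0$ in the reciprocal exponent per step eventually places $k'(\phi)\nabla\phi\cdot\nabla T$ (resp.\ $\mu'(\phi)D(\uu)\nabla\phi$) in $L^M(\Omega)$.

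There is, however, a genuine gap in your derivation of the quantitative bound. You anchor all constants in the energy estimate ``test \eqref{Eq:weakT} with $T$ to get $\|\nabla T\|_{L^2}\lesssim\|\tilde f\|_{0,M}$''. This step is not merely delicate for small $M$, it is false when $M<\frac{2d}{d+2}$, a regime the hypotheses allow in $d=3$ (e.g.\ $M=11/10$, $M'=11$, $r>11$). Indeed, bounding $\int_\Omega\tilde f\,T\le\|\tilde f\|_{0,M}\|T\|_{0,M'}\lesssim\|\tilde f\|_{0,M}\|\nabla T\|_{0,2}$ requires $H^1_0(\Omega)\hookrightarrow L^{M'}(\Omega)$, i.e.\ $M'\le\frac{2d}{d-2}$, i.e.\ $M\ge\frac{2d}{d+2}$; below this threshold $L^M(\Omega)\not\hookrightarrow H^{-1}(\Omega)$, the pairing $\int_\Omega\tilde f\,T$ is not even controlled by H\"older (after full regularity one only has $T\in W^2_M(\Omega)\hookrightarrow L^q(\Omega)$ with $1/q=1/M-2/d$, and $1/M+1/q>1$ exactly when $M<\frac{2d}{d+2}$), and one can exhibit smooth data $\tilde f_n$ bounded in $L^M$ with $\|\tilde f_n\|_{H^{-1}}\to\infty$, hence $\|\nabla T_n\|_{0,2}\ge\|\tilde f_n\|_{H^{-1}}/\|k\|_\infty\to\infty$: the intermediate estimate you aim for cannot hold (this is consistent with the lemma because $W^2_M(\Omega)\not\hookrightarrow H^1(\Omega)$ in this regime). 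Your proposed fix, ``pairing through $W^{-1}_M=(W^1_{M'})'$'', does not repair this, since the test function is $T$ itself and nothing places $T$ in $W^1_{M'}(\Omega)$. The missing idea -- and the reason the statement assumes \emph{both} $M<r$ and $M'<r$, a hypothesis your argument never actually uses -- is to treat the conjugate exponents together by duality: first prove the clean bound for the larger exponent $\max\{M,M'\}\ge 2$, where the energy estimate is legitimate and the bootstrap constants compose; then, for the smaller exponent, establish $\|T\|_{2,M}\lesssim\|\tilde f\|_{0,M}+\|T\|_{0,M}$ (absorbing $\|\nabla T\|_{0,\sigma}$, $1/\sigma=1/M-1/r$, by the compact embedding $W^2_M(\Omega)\hookrightarrow\hookrightarrow W^1_\sigma(\Omega)$, valid since $r>d$), and bound $\|T\|_{0,M}=\sup\big\{\int_\Omega Tg:\|g\|_{0,M'}=1\big\}$ by solving the dual problem $-\nabla\cdot(k(\phi)\nabla S)=g$, invoking the already-proved bound $\|S\|_{2,M'}\lesssim\|g\|_{0,M'}$, and swapping: $\int_\Omega Tg=\int_\Omega k(\phi)\nabla T\cdot\nabla S=\int_\Omega\tilde f\,S\le\|\tilde f\|_{0,M}\|S\|_{0,M'}$. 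The same adjustment is needed for the Stokes part, where your Korn-inequality energy bound has the identical defect.
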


\begin{proof}
The regularity for $\uu$ follows from the proof of \cite[Lemma 4]{Abels:09}. There, regularity
in $\mathbf{H}^2(\Omega)$ for $j=0,1$ was shown. However, a closer inspection of the 
proof shows that the assertion is also valid for $M$ as above and
arbitrary $j\in\nz_0$. The regularity for $T$ can be shown in the same way with
even some simplifications.
\end{proof}

The first regularity result for the solution of~\eqref{eqn} holds under a smallness assumption on the data of the problem. 
The precise result is the following.

\begin{theorem}\label{Thm:regularity}
Given $p>d$ there exists a constant $F_0 > 0$ such that, 
if $\|\phid\|_{2,p} + \| f \|_{0,p} + \| \gb \|_{0,p}< F_0$, 
then there exists $R > 0$ and a solution 
$(\phi_R,T_R,\uu_R)$ of~\eqref{eqnR} which satisfies $\| \nabla T_R \|_{0,\infty}\le R$,
 and consequently $(\phi,T,\uu)=(\phi_R,T_R,\uu_R)$ is also a solution to~\eqref{eqn}.

Moreover, given $\delta> 0$, there exists $F_\delta > 0$ such that whenever 
$\|\phid\|_{2,p}+ \| f \|_{0,p}+ \| \gb \|_{0,2}\le F_\delta \le F_0$ there exists a solution of~\eqref{eqn} with $\| \nabla T\|_{0,\infty}\le \delta$.
\end{theorem}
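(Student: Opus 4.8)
The plan is to use the cut-off $\sigma_R$ in two complementary ways. For each \emph{fixed} $R$ it renders the thermophoretic flux bounded, which together with the existence theory of~\cite{BaeFM:19} and the uniform estimate~\eqref{uniformbound} guarantees that the regularized solution is qualitatively smooth; whereas for the \emph{quantitative}, $R$-independent estimate I would only ever use the elementary contraction property $|\sigma_R(y)|\le|y|$, so that no factor of $R$ enters the constants. Combined with smallness of the data this should yield a bound $\|\nabla T_R\|_{0,\infty}\Cle\mathcal D$ with $\mathcal D:=\|\phid\|_{2,p}+\|f\|_{0,p}+\|\gb\|_{0,p}$, uniformly in $R$. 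Choosing $R:=C\mathcal D$ with $C$ the implied constant then gives, pointwise, $|h(\phi_R)\nabla T_R|\le\tfrac14|\nabla T_R|\le\tfrac14 R<R$ (using $0\le\phi_R\le1$), so $\sigma_R$ acts as the identity on $h(\phi_R)\nabla T_R$, whence $\jjR=\jjp$ and $(\phi,T,\uu)=(\phi_R,T_R,\uu_R)$ solves the original system~\eqref{eqn}.

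First I would upgrade $\phi_R$. Writing $\uu_R\cdot\nabla\phi_R=\Div(\phi_R\uu_R)$ (using $\Div\uu_R=0$) and $\psi_R:=\phi_R-\phid$ (so $\Delta\phid=0$ and $\psi_R|_{\partial\Omega}=0$), equation~\eqref{eqnR:phi} reads $-\Delta\psi_R=\Div\big(\sigma_R(h(\phi_R)\nabla T_R)-\phi_R\uu_R\big)$. For fixed $R$ the right-hand side is the divergence of an $\LlOsix$-field (the cut-off term is bounded by $R$, and $0\le\phi_R\le1$ gives $\phi_R\uu_R\in\LlOsix$), so $W^1_q$-regularity for the Laplacian yields $\phi_R\in W^1_r(\Omega)$ for some $r>d$; after one further pass, in which $\uu_R\in\HhOtwo\hookrightarrow\LlOinf$ has already been gained, one reaches $r>p$. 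This is exactly the hypothesis of Lemma~\ref{L:variablecoefficients}, which applied with $M=p$ to~\eqref{eqnR:T} and with $M=2$ to~\eqref{eqnR:u} gives $T_R\in\Wptwo$ and $\uu_R\in\HhOtwo$, so $\nabla T_R\in\LlOinf$ and $S_R:=\|\phi_R\|_{1,r}+\|T_R\|_{2,p}+\|\uu_R\|_{2,2}$ is finite for every $R$. This qualitative step is the only place the crude bound $|\sigma_R|\le R$ is used.

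Next I would re-run the three estimates keeping all constants $R$-independent, using $|\sigma_R(h(\phi_R)\nabla T_R)|\le\tfrac14|\nabla T_R|$ and, crucially, the algebraic identity $\Div\jjR=-\uu_R\cdot\nabla\phi_R$ that holds along solutions of~\eqref{eqnR:phi}. Setting $X:=\|\phi_R\|_{1,r}$, $Y:=\|T_R\|_{2,p}$, $Z:=\|\uu_R\|_{2,2}$, the divergence form of~\eqref{eqnR:phi} gives $X\Cle\mathcal D+Y+Z$; the lemma applied to~\eqref{eqnR:T} gives $Y\le C_p(X)\big(\|f\|_{0,p}+Q\big)$, where $Q$ collects the terms $\nabla T_R\cdot\jjR$, $T_R\,\uu_R\cdot\nabla\phi_R$ and $\uu_R\cdot\nabla(\eta_R T_R)$, all at least quadratic in $(X,Y,Z)$; and the lemma applied to~\eqref{eqnR:u} gives $Z\le C_2(X)\big(\|\gb\|_{0,2}+Y+Q\big)$, whose only linear coupling is the buoyancy term $T_R\ee_g$. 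The key point is that the linear part of this system is triangular: $Y$ is driven by the data alone, $Z$ by the data and $Y$, and $X$ by the data, $Y$ and $Z$, with no linear feedback into $Y$. Bounding the nondecreasing coefficients $C_p(X),C_2(X)$ by their value at a fixed threshold, the three estimates combine into a single closed inequality $S_R\le C_a\mathcal D+C_b S_R^2$ with $C_a,C_b$ independent of $R$ (recall $\|\gb\|_{0,2}\Cle\|\gb\|_{0,p}$).

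Finally I would close the bootstrap by smallness. Setting $F_0:=1/(4C_aC_b)$, for $\mathcal D<F_0$ the desired small-norm solution is obtained by a Banach fixed-point argument in the ball $\{S\le 2C_a\mathcal D\}$ for the map that freezes coefficients and nonlinearities and solves the resulting linear problems via Lemma~\ref{L:variablecoefficients} (equivalently, the a priori estimate together with the weak bound~\eqref{uniformbound} selects the small branch of the quadratic inequality); the $1$-Lipschitz property of $\sigma_R$ keeps the contraction constant $R$-independent. This yields $\|\nabla T_R\|_{0,\infty}\Cle S_R\Cle\mathcal D$ uniformly in $R$, and taking $R:=C\mathcal D$ proves the first assertion. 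For the \emph{moreover} part, the same bound $\|\nabla T\|_{0,\infty}\le C\mathcal D$ shows that any $F_\delta\le\min\{F_0,\delta/C\}$ produces a solution with $\|\nabla T\|_{0,\infty}\le\delta$. I expect the main obstacle to be precisely the circularity in the second step: one needs $\phi_R\in W^1_r(\Omega)$ with $r>d$ to start the elliptic regularity machine, yet that regularity originates in the thermophoretic term $\sigma_R(h(\phi_R)\nabla T_R)$, which contains the very gradient $\nabla T_R$ one is trying to control. The cut-off breaks this circularity qualitatively (finiteness of $S_R$ at fixed $R$), while the triangular structure of the linearized couplings together with small-data absorption closes it quantitatively and uniformly in $R$.
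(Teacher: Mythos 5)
Your overall architecture (cut-off for qualitative finiteness, then a quantitative smallness argument) resembles the paper's, but the quantitative step as you describe it has a genuine gap. You propose to re-run the estimates with $R$-independent constants, using only $|\sigma_R(y)|\le|y|$, and to close via the inequality $S_R \le C_a\mathcal D + C_b S_R^2$. Two problems arise. First, to even write that inequality with uniform constants you must bound the coefficients $C_p(X)$, $C_2(X)$ of Lemma~\ref{L:variablecoefficients} ``by their value at a fixed threshold'', i.e.\ you must already know that $\|\phi_R\|_{1,r}$ lies below that threshold; but for the regularized solutions furnished by \cite{BaeFM:19} the only $R$-independent information available is the energy bound~\eqref{uniformbound}, which controls $H^1$ norms and says nothing about $\|\phi_R\|_{1,r}$ for $r>d$. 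Second, and more fundamentally, even granting the inequality, $S_R\le C_a\mathcal D+C_bS_R^2$ only confines $S_R$ to $[0,s_-]\cup[s_+,\infty)$ with $s_+\sim 1/C_b$ a fixed constant; your claim that ``the a priori estimate together with the weak bound~\eqref{uniformbound} selects the small branch'' is false, since the branch gap is measured in the strong norm $S_R$, which the $H^1$ bound cannot see, and the qualitative bound from your first step grows with $R$, so it cannot do the selection either. There is also no continuity-in-$R$ argument available, because solutions of~\eqref{eqnR} are neither unique nor known to depend continuously on $R$.

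The paper's proof avoids both issues precisely by \emph{not} making the constants $R$-independent: it keeps the crude bound $|\sigma_R|\le R$ throughout, so all intermediate quantities are estimated by $\Cle R+D$ (cf.~\eqref{boundphiR}, \eqref{bounduR}), and the key structural point is that in the final estimate each term carries at least one factor controlled by the energy bound, hence by $D$. This yields $\|\nabla T_R\|_{0,\infty}\le D(C_1+C_2R+C_3R^2)$ --- a bound that is linear in $D$ and whose right-hand side involves the free parameter $R$, not the unknown norm $S_R$. Closing the argument is then a self-consistent choice of parameters ($D$ small enough that $D(C_1+C_2R+C_3R^2)\le R$ for some $R$), with no quadratic branch ambiguity at all. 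Your fallback, the Banach fixed-point construction in the ball $\{S\le 2C_a\mathcal D\}$, would repair the gap, but it is not ``equivalent'' to the a priori estimate route: it builds a new solution from scratch (rendering the existence theory of \cite{BaeFM:19}, the cut-off, and~\eqref{uniformbound} essentially superfluous), it requires a contraction estimate in a weaker norm together with closedness of the strong-norm ball, and the iterates need not satisfy $0\le\phi\le1$, so you must replace $h(\phi_R)\le\tfrac14$ by the global bound on the $C^2(\rz)$ extension of $h$. Carried out carefully, that would be a valid but genuinely different proof; as written, your proposal hinges on the unjustified branch-selection claim.
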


\begin{proof}
The proof is based on a couple of bootstrap arguments.

We fix $p > d$ and denote $D := \|\phid\|_{2,p} + \| f \|_{0,p} + \| \gb \|_{0,p} < \infty $.
For each $R > 0$, let $(\phi_R,T_R,\uu_R)$ be a solution 
to~\eqref{eqnR} satisfying~\eqref{uniformbound}, and emphasize that the constants involved in the symbols $\Cle$ below do not depend on $R$, $D$, or the problem data $\phid$, $f$, $\gb$, but only on Sobolev embeddings and the constant $C$ from~\eqref{uniformbound}.

By embedding, $\uu_R \in L_6(\Omega)$.
The right-hand side for the $\phi_R$-equation~\eqref{eqnR:phi} is the sum 
of a function in $L_{3/2}(\Omega)$ and the divergence of a function
in $L_{\infty}(\Omega)$ (bounded by $R$). Then by regularity \cite[Theorem 3.29]{Ambrosio:18},  
$\nabla \phi_R\in L_3(\Omega)$. Now, the right-hand side of~\eqref{eqnR:phi} is the sum of a function in $L_2(\Omega)$ and the divergence of a function
in $L_{\infty}(\Omega)$. This sum is now the divergence of a function in $L_6(\Omega)$, so that by the same regularity result \cite[Theorem 3.29]{Ambrosio:18}, $\nabla\phi_R \in L_6(\Omega)$ and repeating this argument, $\nabla\phi_R \in L_M(\Omega)$ 
for all $1\leq M < \infty$ with the estimate
\begin{equation}\label{boundphiR}
\| \nabla \phi_R \|_{0,M} \Cle R + D, 
\end{equation}
which thereby implies
\begin{equation}\label{boundjR}
\| \jjR \|_{0,M} \Cle R+D . 
\end{equation}

Let us now turn to the equations for $T_R$ and $\uu_R$.
Thanks to~\eqref{eqnR:phi} and the definition of
$\eta,\rho$ the right-hand sides can be written as
\begin{align*}
 \rhsT &= f -(\jjR +\eta_R\uu_R)\cdot \nabla T_R , \\
 \rhsu &= \gb -(\jjR +\rho_R\uu_R)\cdot \nabla \uu_R - T_R \ee_g.
\end{align*}
First, we need an intermediate regularity result for $\uu_R$, namely
$\uu_R\in L_M(\Omega)$ for all $1\leq M < \infty$. To this end, we observe
$$
\|\rhsu\|_{0,3/2} \Cle \| \gb\|_{0,3/2} + (\|\jjR\|_{0,6} +
\|\uu_R\|_{0,6})\|\nabla \uu_R\|_{0,2} + \| T_R \|_{0,3/2}.
$$
Lemma \ref{L:variablecoefficients} (for $j=0$, $M=3/2$, $r>3$) and~\eqref{uniformbound} with~\eqref{boundphiR} yield
$$
\|\uu_R\|_{2,{3/2}} \Cleq \|\rhsu\|_{0,3/2} \Cleq
R+D
$$
and therefore $\uu_R\in\mathbf{L}_M$ for all $1\leq M < \infty$ 
(since $\mathbf{W}^2_{3/2}(\Omega)\hookrightarrow \mathbf{L}_M$) and
\begin{equation}\label{bounduR}
 \|\uu_R\|_{0,M} 
 \leqC  R+D.
\end{equation}

Now fix any $M > 2d/(4-d)$.
Using H\"older's inequality for $\tilde M = \frac{2M}{M+2}<2<p$, 
we get
\[
\| \rhsT \|_{0,{\tilde M}} \le \| f \|_{0,{\tilde M}} 
         + \| \nabla T_R \|_{0,2} (\| \jjR \|_{0,M} + \|\uu_R\|_{0,M})
         ,
\]
whence~\eqref{uniformbound} and the bounds~\eqref{bounduR}~\eqref{boundjR} imply
\[
\| \rhsT \|_{0,{\tilde M}} \Cle \| f \|_{0,{\tilde M}} 
+  D (R+D).
\]

Therefore, by Lemma \ref{L:variablecoefficients} applied to $T_R$ we have 
$
\| T_R \|_{2,{\tilde M}} \leC \| f \|_{0,{\tilde M}}  +  D ( R+D)
$,
which by Sobolev embedding implies
\[
\| \nabla T_R \|_{0,{\tilde M^*}} \leC \| f \|_{0,{\tilde M}}  + D (R+D)
\]
for $\tilde M^* = \frac{\tilde M d}{d- \tilde M} > d$ (see Lemma~\ref{L:propertyofM} below).

Let now $d<t<\tilde t := \min\{ p,\tilde M^*\}\le p$ and $1/q:=1/t - 1/{\tilde t} $ then
\begin{align*}
\| \rhsT \|_{0,t} 
&\le  \| f \|_{0,t} + \| \nabla T_R \|_{0,\tilde t} 
(\|\jjR \|_{0,q} + \|\uu_R\|_{0,q})
\leC  \| f \|_{0,p} +\|f \|_{0,p} R + D R^ 2,
\end{align*}
where we have used~\eqref{bounduR}. 

Finally, using again Lemma \ref{L:variablecoefficients} and the embedding $W_t^2(\Omega) \hookrightarrow W^1_\infty(\Omega)$, we have
$\| \nabla T_R \|_{0,\infty} \leC \| T_R \|_{W_t^2(\Omega)}$ so that
\begin{equation}\label{bound of grad T}
\| \nabla T_R \|_{0,\infty} 
    \le D \big(C_1 + C_2 R + C_3 R^2\big) . 
\end{equation}
We notice that $D \big(C_1 + C_2 R + C_3 R^2\big)$ can be made smaller than $R$ by choosing $D < R / \big(C_1 + C_2 R + C_3 R^2\big)$, so that under this assumption, $\| \nabla T_R \|_{0,\infty} \le R$ and the first assertion follows with $F_0 := 1/C_2 = \sup_{R >0} R / \big(C_1 + C_2 R + C_3 R^2\big)$.

The second assertion is an immediate consequence of~\eqref{bound of grad T}.
\end{proof}

\begin{lemma}\label{L:propertyofM}
If $M > \frac{2d}{4-d}$ then $\tilde M^* = \frac{d\tilde M}{d - \tilde M}$, the Sobolev conjugate of $\tilde M := \frac{2M}{M+2}$ is larger than $d$.
\end{lemma}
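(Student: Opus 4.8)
The plan is to prove Lemma~\ref{L:propertyofM}, which is a purely elementary computation verifying that a certain chain of Sobolev exponents lands above the dimension $d$. First I would translate the hypothesis $M > \frac{2d}{4-d}$ into a statement about $\tilde M := \frac{2M}{M+2}$, since everything is ultimately a function of $\tilde M$. The map $M \mapsto \tilde M$ is increasing, so the threshold $M = \frac{2d}{4-d}$ corresponds to a specific critical value of $\tilde M$; substituting gives $\tilde M_{\mathrm{crit}} = \frac{2 \cdot \frac{2d}{4-d}}{\frac{2d}{4-d}+2} = \frac{\frac{4d}{4-d}}{\frac{2d+2(4-d)}{4-d}} = \frac{4d}{2d+8-2d} = \frac{4d}{8} = \frac{d}{2}$. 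Thus the hypothesis is equivalent to $\tilde M > \frac{d}{2}$.

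Next I would compute the Sobolev conjugate $\tilde M^* = \frac{d\tilde M}{d-\tilde M}$ at this threshold and check monotonicity. Note that for the conjugate to be well-defined and positive we need $\tilde M < d$, which holds automatically here since $\tilde M = \frac{2M}{M+2} < 2 \le d$. The function $\tilde M \mapsto \frac{d\tilde M}{d-\tilde M}$ is strictly increasing on $(0,d)$ (its derivative is $\frac{d^2}{(d-\tilde M)^2}>0$), so it suffices to evaluate it at the critical value $\tilde M = \frac{d}{2}$ and observe that the result equals $d$. Indeed,
\[
\frac{d \cdot \frac{d}{2}}{d - \frac{d}{2}} = \frac{\frac{d^2}{2}}{\frac{d}{2}} = d.
\]
Therefore, whenever $\tilde M > \frac{d}{2}$ — that is, whenever $M > \frac{2d}{4-d}$ — strict monotonicity gives $\tilde M^* > d$, as claimed.

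I do not anticipate any genuine obstacle: the entire argument is a one-variable monotonicity check combined with evaluating a rational expression at the boundary case. The only points requiring mild care are verifying that $\tilde M$ stays in the admissible range $(0,d)$ so that the Sobolev conjugate formula applies (which follows from $\tilde M < 2 \le d$, using $d \in \{2,3\}$), and correctly tracking that both composed maps $M \mapsto \tilde M$ and $\tilde M \mapsto \tilde M^*$ are increasing so that the strict inequality in the hypothesis propagates through to the conclusion. One could alternatively present the whole thing as a single direct algebraic inequality $\tilde M^* > d \iff \tilde M > d/2 \iff M > \frac{2d}{4-d}$, each step being an equivalence, which makes the proof essentially a two-line back-substitution.
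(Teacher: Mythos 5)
Your proof is correct and amounts to the same elementary computation as the paper's: the paper substitutes $\tilde M = \frac{2M}{M+2}$ directly into the conjugate formula, obtaining $\tilde M^* = \frac{2Md}{d(M+2)-2M}$, and reduces $\tilde M^* > d$ to the equivalent inequality $M(4-d) > 2d$, which is exactly the hypothesis. Your monotonicity-plus-threshold organization (the critical value $\tilde M = d/2$ mapping to $\tilde M^* = d$) is just a different packaging of the same algebra --- indeed the ``alternative'' equivalence chain you mention at the end is precisely the paper's argument.
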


\begin{proof}
Let $M > \frac{2d}{4-d}$, then 
\begin{align*}
\tilde M^* &= \bigg(\frac{2M}{M+2}\bigg)^* 
  = \frac{\frac{2M}{M+2}d}{d-\frac{2M}{M+2}} = \frac{2Md}{d(M+2)-2M} > d 
\end{align*}
because $2M > d(M+2)-2M$ if and only if $M(4-d) > 2d$.
\end{proof}

Combining the previous theorem with regularity results we obtain the following corollary.

\begin{corollary}\label{C:regular}
If $p>d$, there exist positive constants $F_0$ and $C_p$ such that, 
if $\|\phid\|_{2,p} + \| f \|_{0,p} + \| \gb \|_{0,2} \le F_0$, then problem~\eqref{eqn} has a 
(possibly non-unique) solution $(\phi,T,\uu)$ satisfying
\[  
   \| \phi \|_{2,p}  + \| T \|_{2,p} 
  + \| \uu \|_{2,2}  \le C_p (\|\phid\|_{2,p} + \| f \|_{0,p} + \| \gb \|_{0,2}).
\]
\end{corollary}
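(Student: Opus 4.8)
The plan is to take the solution already produced by Theorem~\ref{Thm:regularity} and run one more round of the bootstrap, exploiting that for it the cut-off is inactive ($\jjR=\jj$) and $\nabla T$ is controlled in $L_\infty$. Write $D:=\|\phid\|_{2,p}+\| f\|_{0,p}+\|\gb\|_{0,2}$, and choose $F_0$ no larger than the constant of Theorem~\ref{Thm:regularity}, so that for $D\le F_0$ there is a solution $(\phi,T,\uu)=(\phi_R,T_R,\uu_R)$ of~\eqref{eqn} with $\|\nabla T\|_{0,\infty}\le R$, where I extract $R$ as a \emph{fixed} constant (e.g. the smaller root in that proof). The decisive point is that~\eqref{bound of grad T} then upgrades this to the \emph{linear} control $\|\nabla T\|_{0,\infty}\le D\,(C_1+C_2R+C_3R^2)\Cle D$. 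Feeding $\|\nabla T\|_{0,\infty}\Cle D$ back into the intermediate estimates~\eqref{boundphiR},~\eqref{boundjR},~\eqref{bounduR} from the proof of Theorem~\ref{Thm:regularity} — whose right-hand sides have the form $R+D$ with $R$ merely playing the role of a bound for $\|\nabla T\|_{0,\infty}$ — shows that in fact $\|\nabla\phi\|_{0,M}+\|\jjR\|_{0,M}+\|\uu\|_{0,M}\Cle D$ for every finite $M$, alongside $\|\phi\|_{1,2}+\|T\|_{1,2}+\|\uu\|_{\Vv}\Cle D$ from~\eqref{uniformbound}.

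With these linear zeroth-order controls I would bootstrap in the order $T$, then $\uu$, then $\phi$, since each step feeds the next. For $T$, rewrite the source as $\rhsT=f-(\jjR+\eta\uu)\cdot\nabla T$ and bound $\|\rhsT\|_{0,p}\le\|f\|_{0,p}+\|\nabla T\|_{0,\infty}\big(\|\jjR\|_{0,p}+\|\eta\uu\|_{0,p}\big)\Cle D$; Lemma~\ref{L:variablecoefficients} with $j=0$, $M=p$ and some $r>p$ (using $\phi\in W^1_r(\Omega)$) then gives $\|T\|_{2,p}\Cle D$. For $\uu$, use $\rhsu=\gb-(\jjR+\rho\uu)\cdot\nabla\uu-T\ee_g$ and the embedding $\mathbf{W}^2_{3/2}(\Omega)\hookrightarrow\mathbf{W}^1_3(\Omega)$ (valid for $d\le 3$, available from~\eqref{bounduR} and its proof) to estimate $\|(\jjR+\rho\uu)\cdot\nabla\uu\|_{0,2}\le\big(\|\jjR\|_{0,6}+\|\rho\uu\|_{0,6}\big)\|\nabla\uu\|_{0,3}\Cle D^2$, so that $\|\rhsu\|_{0,2}\Cle D$ and Lemma~\ref{L:variablecoefficients} with $j=0$, $M=2$ yields $\|\uu\|_{2,2}\Cle D$. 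Finally, for $\phi$ I subtract the harmonic extension ($\Delta\phid=0$) and apply standard $W^2_p$ elliptic regularity to $-\Delta(\phi-\phid)=h'(\phi)\nabla\phi\cdot\nabla T+h(\phi)\Delta T-\uu\cdot\nabla\phi$: the first two terms are in $L_p$ since $\nabla T\in L_\infty$, $\nabla\phi\in L_p$ and $\Delta T\in L_p$, while $\uu\cdot\nabla\phi\in L_p$ because $\uu\in\mathbf{H}^2(\Omega)\hookrightarrow\mathbf{L}^\infty(\Omega)$ for $d\le 3$; this gives $\|\phi\|_{2,p}\le\|\phid\|_{2,p}+C\,\|\text{RHS}\|_{0,p}\Cle D$.

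The one genuinely delicate issue is the \emph{linearity} of the final bound: every nonlinear contribution above is quadratic, of size $\Cle D^2$ or $\Cle\|\nabla T\|_{0,\infty}\,D$, and it is precisely the bound $\|\nabla T\|_{0,\infty}\Cle D$ from~\eqref{bound of grad T} (which holds because $R$ was fixed as a constant) that converts them into $\Cle D^2\le F_0\,D$, absorbable into the linear data term. I would therefore be careful to freeze $R$ before invoking~\eqref{bound of grad T}, and to shrink $F_0$ if necessary, so that the three estimates combine into $\|\phi\|_{2,p}+\|T\|_{2,p}+\|\uu\|_{2,2}\le C_p\,D$ with $C_p$ depending only on $p$, $\Omega$, the relevant Sobolev embeddings, and the coefficient bounds on $h,k,\mu$. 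A secondary point worth checking is that the velocity chain only ever uses $\|\gb\|_{0,2}$ (through $\|\gb\|_{0,3/2}\Cle\|\gb\|_{0,2}$), which is what lets the statement be phrased with $\|\gb\|_{0,2}$ rather than $\|\gb\|_{0,p}$.
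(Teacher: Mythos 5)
Your proposal is correct and follows essentially the same route as the paper: take the solution from Theorem~\ref{Thm:regularity} with the cut-off inactive, run one more bootstrap round through Lemma~\ref{L:variablecoefficients} and elliptic regularity, and keep every nonlinear contribution quadratic (via the fixed $R$ and $\|\nabla T\|_{0,\infty}\Cle D$) so that the final bound stays linear in $D$. The only real difference is your ordering $T\to\uu\to\phi$ versus the paper's $\phi\to T\to\uu$, and yours is in fact the logically cleaner one, since bounding $\|\phi\|_{2,p}$ requires $h(\phi)\Delta T\in L_p$, i.e. the $T$-step must come first.
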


\begin{proof}
	We let $F_0$ be as in Theorem~\ref{Thm:regularity} and the problem data satisfy $D:= \|\phid\|_{2,p} + \| f \|_{0,p} + \| \gb \|_{0,2} < F_0$. Then, there exists $R > 0$ and a solution $(\phi_R, T_R, \uu_R)$ of~\eqref{eqnR} which satisfies $\| \nabla T_R \|_{0,\infty} \le R$ and~\eqref{boundphiR}--\eqref{bounduR}. Therefore, $\sigma_R(h(\phi_R)\nabla T_R) = h(\phi_R)\nabla T_R$ and this triple $(\phi_R, T_R, \uu_R)$ is also a solution to~\eqref{eqn} which we now call $(\phi, T, \uu)$.
	
	 Moreover, $\phi$ is the solution of $-\Delta \phi = \rhsphi := \nabla\cdot (h(\phi) \nabla T )- \uu \cdot \nabla \phi $, in $\Omega$, $\phi = \phi_D$ on $\partial\Omega$, with $\rhsphi \in L_p(\Omega)$, so that~\cite[Theorem 3.29]{Ambrosio:18} implies that 
	 \[
	 \| \phi\|_{2,p} \leC \| \rhsphi \|_{0,p} \leC D.
	 \]
	 Also, $T$ is a solution of~\eqref{Eq:weakT} with $\tilde f  =  f 
	 -\nabla \cdot (T\jjp) -\uu \cdot \nabla (\eta T ) \in L_p(\Omega)$ whence
	 \[
	 \| T \|_{2,p} \leC D.
	 \]
It remains to show
$H^2$-regularity for $\uu$, which is an immediate consequence of the fact that $\uu$ is a solution to~\eqref{Eq:weakU} with 
$\tilde\gb = \gb - (\jj + (1+\phi)\uu)\cdot \nabla \uu - T \ee_g$, which satisfies
$$
\|\tilde\gb\|_{0,2} \leC \| \gb\|_{0,2} + (\|\jjR\|_{0,6} +
  \|\uu\|_{0,6})\|\nabla \uu\|_{0,3} + \| T \|_{0,2}.
$$
Note that in the proof of Theorem \ref{Thm:regularity} above 
we have already shown that $\uu_R \in W^2_{3/2}(\Omega) \hookrightarrow W^1_3(\Omega)$.
\end{proof}

Once regularity in $W^2_p(\Omega), \mathbf{H}^2(\Omega)$ is established, it is
not difficult to get higher regularity, provided data is more regular. This is stated
in the next corollary.

\begin{corollary}[Higher regularity]\label{Coroll:higher}
For $j\in\nz_0$ let $k(\cdot), \mu(\cdot)\in C^{j+1}(\rz)$ 
with all derivatives up to order $j+1$ bounded, $\partial\Omega\in C^{j+2}$ and $p>d$.
If the solution of \eqref{eqn} is regular, i.e.
$\phi, T\in W^2_p(\Omega), \uu\in\mathbf{H}^2(\Omega)$ and data $\phid\in W^{2+j}_p(\Omega)$,
$f\in W^j_p(\Omega)$, $\gb\in\mathbf{H}^j(\Omega)$ then $\phi, T\in W^{2+j}_p(\Omega)$,
$\uu\in\mathbf{H}^{j+2}(\Omega)$ and
$$
\|\phi\|_{2+j,p} + \|T\|_{2+j,p} + \|\uu\|_{2+j,2}
   \leqC \|\phid\|_{2+j,p} + \|f\|_{j,p} + \|\gb\|_{j,2}.
$$
\end{corollary}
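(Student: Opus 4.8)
The plan is to argue by induction on $j$, the base case $j=0$ being exactly the hypothesis that the solution is regular. For the inductive step I assume the assertion at level $j-1$, i.e. $\phi,T\in W^{1+j}_p(\Omega)$ and $\uu\in\mathbf{H}^{1+j}(\Omega)$, and I aim to gain one derivative on each component. The two workhorses are the $L_p$-regularity for the Poisson problem \cite[Theorem 3.29]{Ambrosio:18} applied to the $\phi$-equation, and Lemma~\ref{L:variablecoefficients} (at index $j$) applied to the $T$- and $\uu$-equations. The algebraic backbone is that for $p>d$ and $s\ge 1$ the space $W^s_p(\Omega)$ is a Banach algebra, that multiplication maps $W^{1+\ell}_p\times W^{\ell}_p\to W^{\ell}_p$ continuously (using $W^{1+\ell}_p\hookrightarrow C^{\ell}$), and that composition with the $C^{j+1}$ functions $h,k,\mu$ (and with $\eta=\rho=1+\phi$) preserves $W^{1+\ell}_p$; these facts turn every product on the right-hand sides into a function of the expected regularity.

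The key observation — without which the divergence-form terms would cost a derivative — is that the nonlinearities must be rewritten using the equations themselves before any elliptic estimate is invoked. Indeed, the $\phi$-equation says precisely that $\nabla\cdot\jj=-\uu\cdot\nabla\phi$, where $\jj=-\nabla\phi-h(\phi)\nabla T$, while $\nabla\cdot\uu=0$. Using these identities I would expand $\nabla\cdot(T\jj)=\nabla T\cdot\jj-T\,(\uu\cdot\nabla\phi)$ and $\nabla\cdot(\uu\otimes\jj)=(\jj\cdot\nabla)\uu-(\uu\cdot\nabla\phi)\,\uu$, so that the rewritten right-hand sides $\tilde f=f-\nabla T\cdot\jj+T(\uu\cdot\nabla\phi)-\uu\cdot\nabla(\eta T)$ and $\tilde\gb=\gb-(\jj\cdot\nabla)\uu+(\uu\cdot\nabla\phi)\uu-\uu\cdot\nabla(\rho\uu)-T\ee_g$ contain only first derivatives of $\phi$ and $T$. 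Consequently $\tilde f\in W^j_p(\Omega)$ and $\tilde\gb\in\mathbf{H}^j(\Omega)$ follow directly from the multiplication rules above (together with $\mathbf{H}^{1+j}\hookrightarrow\mathbf{L}^\infty$, valid since $d\le 3$ and $j\ge1$), with no second-order coupling between the equations surviving.

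With the right-hand sides under control I would bootstrap in the order $T\to\phi\to\uu$. First, Lemma~\ref{L:variablecoefficients} with $M=p$ applied to the $T$-equation gives $T\in W^{2+j}_p(\Omega)$, hence $h(\phi)\nabla T\in W^{1+j}_p(\Omega)$. Feeding this into the $\phi$-equation, whose right-hand side $\nabla\cdot(h(\phi)\nabla T)-\uu\cdot\nabla\phi$ now lies in $W^j_p(\Omega)$, \cite[Theorem 3.29]{Ambrosio:18} upgrades $\phi$ to $W^{2+j}_p(\Omega)$. Finally Lemma~\ref{L:variablecoefficients} with $M=2$ (legitimate since $r=p>d\ge 2=M$) applied to the Stokes-type $\uu$-equation yields $\uu\in\mathbf{H}^{2+j}(\Omega)$. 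Collecting the three estimates and using that the constant $C_M(\|\phi\|_{1+j,r})$ in Lemma~\ref{L:variablecoefficients} is non-decreasing in $\|\phi\|_{1+j,r}$ produces the stated bound, and the induction closes.

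The step I expect to be delicate is the coefficient regularity required by Lemma~\ref{L:variablecoefficients} when improving $T$ at the top level: the lemma is phrased with $\phi\in W^{1+j}_r$ for some $r>M=p$, whereas the data only furnish $\phi\in W^{1+j}_p$, and the integrability of the top derivative of $\phi$ cannot be raised above $p$ because, through its equation, it is tied to $\nabla T$ and ultimately to $f\in W^j_p(\Omega)$. The resolution is that $W^{1+j}_p(\Omega)\hookrightarrow C^{j,\alpha}(\overbar{\Omega})$ with $\alpha=1-d/p>0$, so the coefficient $k(\phi)$ is H\"older continuous up to order $j$, which is exactly what the elliptic estimate behind Lemma~\ref{L:variablecoefficients} (via \cite[Lemma 4]{Abels:09}) needs; equivalently, once $\phi\in W^{2+j}_p\hookrightarrow W^{1+j}_\infty$ has been obtained it retroactively supplies the coefficient regularity for any finite $r$. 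A secondary point to watch is that the algebra property of $W^\ell_p$ fails for $\ell=0$, but this concerns only the base case, which is assumed rather than proved.
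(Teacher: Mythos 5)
Your strategy is essentially the paper's own: induction on $j$, rewriting of the right-hand sides with the help of the $\phi$-equation (i.e.\ $\nabla\cdot\jj=-\uu\cdot\nabla\phi$) and $\Div\uu=0$ so that only first derivatives of $\phi$ and $T$ survive, followed by the bootstrap $T\to\phi\to\uu$ via Lemma~\ref{L:variablecoefficients} and elliptic regularity for the $\phi$-equation. The gap lies in your claim that $\tilde f\in W^j_p(\Omega)$ (and likewise that the right-hand side of the $\phi$-equation lies in $W^j_p(\Omega)$) follows ``directly from the multiplication rules''. The obstruction is not the second-order coupling --- you are right that it cancels --- but the mismatch of integrability between the scalar unknowns ($L_p$-based spaces) and the velocity ($L_2$-based spaces). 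In the Leibniz expansion of $\partial_\alpha\big((1+\phi)\uu\cdot\nabla T\big)$ with $|\alpha|=j$, the worst summand carries all $j$ derivatives on $\uu$; by the inductive hypothesis it satisfies only $\partial_\alpha\uu\in\mathbf{H}^1(\Omega)\hookrightarrow\mathbf{L}_6(\Omega)$ when $d=3$, so that summand lies in $L_{\min\{6,p\}}(\Omega)$ and \emph{not} in $L_p(\Omega)$ when $p>6$. Your appeal to $\mathbf{H}^{1+j}\hookrightarrow\mathbf{L}_\infty$ controls $\uu$ itself, but not its top-order derivatives, which is exactly where the problem sits. Since the corollary is stated for every $p>d$, the case $d=3$, $p>6$ is part of the claim (for $d=2$ your one-pass argument does go through, because there $\mathbf{H}^1$ embeds into every $\mathbf{L}_q$, $q<\infty$).

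The paper closes precisely this hole with a two-pass structure that your proposal lacks: it first gains the extra derivative at the intermediate exponent $\min\{6,p\}$ for $T$ and $\phi$ (nothing is lost for $\uu$, whose target space is $L_2$-based), and then performs ``one more sweep'': since by then all derivatives of $\uu$ of order up to $1+j$ lie in $\mathbf{H}^2(\Omega)\hookrightarrow\mathbf{L}_\infty(\Omega)$, the offending products land in $L_p(\Omega)$ and $T,\phi$ are upgraded to full $W_p$-regularity. The same second pass is also what rescues the coefficient issue you flag at the end, where your two proposed resolutions do not hold up as stated: the ``retroactive'' one is circular, because in your ordering $T$ is improved \emph{before} $\phi$, so $\phi\in W^{2+j}_p\hookrightarrow W^{1+j}_\infty$ is not yet available when the coefficient regularity is consumed; and H\"older continuity of $k(\phi)$ up to order $j$ is not all that Lemma~\ref{L:variablecoefficients} needs --- differentiating the equation $j$ times produces terms such as $k'(\phi)\,(\partial^j\nabla\phi)\cdot\nabla T$ containing the derivatives of $\phi$ of order $1+j$, which the inductive hypothesis controls only in $L_p$, whence the lemma's requirement $r>M$. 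After the first, intermediate sweep, $\phi$ has gained a full derivative at an exponent $>d$ and therefore lies in $W^{2+j}_r$ for every finite $r$ by Morrey's embedding, so the second application of the lemma is legitimate. Restructure your induction step accordingly --- intermediate exponent $\min\{6,p\}$, then a second sweep --- and the argument closes for all $p>d$.
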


\begin{proof}
The proof is based on induction over $j$. The case $j=0$ is the assumption of this corollary, which holds under the smallness assumption of Corollary~\ref{C:regular}.
So let us assume that the statement is correct for $j$. 
We shall then show that it also holds for $j+1$.
First, note that by the induction assumption and because $p>d$
$$
\partial_\beta\phi, \, \partial_\beta T \in L_\infty(\Omega)
$$
for any multiindex $\beta\in\nz_0^d$ with $|\beta|\leq 1+j$ and 
$$
\partial_\beta\phi, \, \partial_\beta T \in L_p(\Omega)
$$
for $\beta\in\nz_0^d$ with $|\beta|\leq 2+j$. Likewise,
$$
\partial_\beta\uu \in \mathbf{H}^2(\Omega)\hookrightarrow \mathbf{L}_\infty(\Omega)
$$
for $|\beta|\leq j$ and
$$
\partial_\beta\uu \in \mathbf{H}^1(\Omega)\hookrightarrow \mathbf{L}_6(\Omega)
$$
for $|\beta|\leq 1+j$.
Recall that $T$ is a solution to~\eqref{Eq:weakT} with 
$\tilde f = f - (\jj + (1+\phi) \uu)\cdot \nabla T = f + \nabla T \cdot \nabla \phi + h(\phi) |\nabla T|^2 -  (1+\phi)\uu\cdot\nabla T$.
Let us now check the regularity of $\partial_\alpha \tilde f$ for $|\alpha|=1+j$:
\begin{equation}\label{Eq:reg1}
\partial_\alpha \tilde f = \partial_\alpha f + \partial_\alpha(\nabla\phi\cdot\nabla T)
 + \partial_\alpha(h(\phi)\nabla T\cdot\nabla T)
 - \partial_\alpha\big((1+\phi)\uu\cdot\nabla T\big).
\end{equation}
Clearly, the first term on the right-hand side is in $L_p(\Omega)$. By Leibniz' formula,
the second term can be written as
$$
\partial_\alpha (\nabla\phi\cdot\nabla T) = \sum_{\beta\leq\alpha}
\begin{pmatrix}
\alpha \\
\beta
\end{pmatrix}
\nabla\partial_\beta \phi\cdot \nabla\partial_{\alpha-\beta}T.
$$
Our first observation yields
$\partial_\alpha (\nabla\phi\cdot\nabla T) \in L_p(\Omega)$.
The last term can be written by Leibniz' formula as
$$
\partial_\alpha \big((1+\phi)\uu\cdot\nabla T) = 
\mathop{\sum_{|\beta_1|+|\beta_2|+|\beta_3|=1+j}}_{\beta_1,\beta_2,\beta_3 \leq \alpha}
\begin{pmatrix}
\alpha \\
\beta_1,\beta_2,\beta_3
\end{pmatrix}
\partial_{\beta_1}(1+\phi)\partial_{\beta_2}\uu\cdot\nabla \partial_{\beta_3}T. 
$$
In view of our first observation, the worst summand in the sum above (if $6<p$) is attained 
for $|\beta_2|=1+j$ with $\partial_{\beta_2}\uu\in\mathbf{H}^1(\Omega)
\hookrightarrow \mathbf{L}_6(\Omega)$,  showing that altogether
$\partial_\alpha \big((1+\phi)\uu\cdot\nabla T) \in L_{\min\{6,p\}}(\Omega)$. Since $|\alpha|=1+j$ was
arbitrary, this shows $\partial_\alpha\tilde f\in L_{\min\{6,p\}}(\Omega)$. 
As an intermediate result one gets
$$
T\in W^{3+j}_{\min\{6,p\}}(\Omega).
$$
Note that, if $d<p\leq 6$ this is already the desired regularity for $T$.

Now we turn our attention to the $\phi$--equation:
$$
-\Delta \phi 
= 
\Div(h(\phi)\nabla T) -\uu\cdot \nabla \phi =: \rhsphi. 
$$
With the same arguments as above, one concludes $\partial_\alpha\rhsphi \in L_{\min\{6,p\}}(\Omega)$ for all $|\alpha|\le j+1$ and thus
$$
\phi \in W^{3+j}_{\min\{6,p\}}(\Omega).
$$
The velocity $\uu$ is a solution to~\eqref{Eq:weakU} with $\tilde \gb = \gb + \big(\nabla \phi + h(\phi)\nabla T - (1+\phi) \uu\big)\cdot \nabla \uu - T \ee_g$.
The derivative of the right hand side $\tilde\gb$ for this momentum equation reads
$$
\partial_\alpha \tilde\gb= \partial_\alpha \gb + \partial_\alpha\big[\big(\nabla \phi
    +h(\phi)\nabla T - (1+\phi)\uu\big)\cdot\nabla \uu\big] - \partial_\alpha T \ee_g.
$$
Expanding again the derivative by Leibniz' formula and observing 
$W^1_{\min\{6,p\}}(\Omega)\hookrightarrow L_\infty(\Omega)$ the worst term
is identified to be 
$$
(1+\phi)\uu\cdot\nabla \partial_\alpha\uu \in \mathbf{L}_2(\Omega)
$$
for all $|\alpha| \le 1+j$ and then
$$
\uu \in \mathbf{H}^{3+j}(\Omega),
$$
which is already the desired regularity for $\uu$.

One more sweep of the above arguments, but now using the intermediate
regularity results,  concludes the proof.
\end{proof}

\begin{lemma}[Existence and regularity of the pressure]\label{L:pressure}
Let the assumptions of Lemma \ref{L:variablecoefficients} hold for some $j,r,M$ and let
$\uu\in\Vv$ be a solution of Eq. \eqref{Eq:weakU}. Then there exists a unique 
pressure $p\in L_{2,0}(\Omega)\cap W^{1+j}_M(\Omega)$ fulfilling
$$
\Div(\muu D(\uu)) + \nabla p  = \tilde{\gb}
$$
and $
\| p \|_{1+j,M} \leq C \| \tilde{\gb}\|_{j,M}.
$
\end{lemma}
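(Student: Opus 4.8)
The plan is to recover $p$ by de Rham's theorem from the weak formulation~\eqref{Eq:weakU}, and then to transfer to $p$ the regularity that Lemma~\ref{L:variablecoefficients} already grants $\uu$, using a Ne\v{c}as-type estimate that controls a mean-zero function by its gradient. For existence, I would consider the functional $F(\vv):=\int_\Omega\frac{\muu}2 D(\uu):D(\vv)-\int_\Omega\tilde{\gb}\cdot\vv$ on $\HhOzero$. Since $\uu\in\Vv\subset\HhOzero$ gives $D(\uu)\in\LlOtwo$ and $\muu\in\LlOinf$, the stress term is bounded on $\HhOzero$, and together with $\tilde{\gb}\in\mathbf{H}^{-1}(\Omega)$ this yields $F\in\mathbf{H}^{-1}(\Omega)$. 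By~\eqref{Eq:weakU}, $F$ vanishes on $\mathcal{D}(\Omega)$, hence on all of $\Vv$ by density. De Rham's theorem (equivalently, the closed range of $\Div\colon\HhOzero\to L_{2,0}(\Omega)$) then produces a unique $p\in L_{2,0}(\Omega)$ with
\[
\int_\Omega\tfrac{\muu}2 D(\uu):D(\vv)-\int_\Omega p\,\Div\vv=\int_\Omega\tilde{\gb}\cdot\vv\qquad\text{for all }\vv\in\HhOzero,
\]
which is precisely the weak form of the momentum equation of the statement once the pressure is included, and the zero-mean normalization fixes $p$ uniquely.

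Next I would bound $\nabla p$ in $W^j_M(\Omega)$. The identity above identifies $\nabla p$ with $\tilde{\gb}$ plus the distributional divergence of the stress $\muu D(\uu)$, so $\|\nabla p\|_{j,M}\le\|\tilde{\gb}\|_{j,M}+\|\Div(\muu D(\uu))\|_{j,M}$, and it suffices to control $\muu D(\uu)$ in $W^{1+j}_M(\Omega)$. Lemma~\ref{L:variablecoefficients} already gives $\uu\in W^{2+j}_M(\Omega)$ with $\|\uu\|_{2+j,M}\le C_M\|\tilde{\gb}\|_{j,M}$, hence $D(\uu)\in W^{1+j}_M(\Omega)$. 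As $\mu(\cdot)\in C^{j+1}$ has bounded derivatives and $\phi\in W^{1+j}_r(\Omega)$ with $r>d$, the chain rule gives $\muu\in W^{1+j}_r(\Omega)\hookrightarrow L_\infty(\Omega)$, and a Leibniz expansion combined with the H\"older/Sobolev embeddings underlying Lemma~\ref{L:variablecoefficients} (using $M<r$) shows $\muu D(\uu)\in W^{1+j}_M(\Omega)$ with $\|\muu D(\uu)\|_{1+j,M}\Cle\|\uu\|_{2+j,M}$. Consequently $\Div(\muu D(\uu))\in W^j_M(\Omega)$ and $\|\nabla p\|_{j,M}\Cle\|\tilde{\gb}\|_{j,M}$.

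Finally I would recover $p$ from $\nabla p$. On a bounded domain with $C^{j+2}$ boundary the Ne\v{c}as inequality $\|q\|_{0,M}\le C\|\nabla q\|_{-1,M}$ for mean-zero $q$, together with its higher-order counterpart $\|q\|_{1+j,M}\le C\|\nabla q\|_{j,M}$, lets one conclude $p\in W^{1+j}_M(\Omega)$ with $\|p\|_{1+j,M}\le C\|\nabla p\|_{j,M}\Cle\|\tilde{\gb}\|_{j,M}$, which is consistent with the $p\in L_{2,0}(\Omega)$ obtained above. I expect the main obstacle to be exactly this last ingredient: establishing the $L_M$-pressure estimate and, when $M>2$, upgrading the integrability of $p$ from the $L_2$ furnished by de Rham to $L_M$ via the base Ne\v{c}as inequality (for $M\le2$ this is automatic on the bounded domain). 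This is where the smoothness $C^{j+2}$ of $\partial\Omega$ and the admissible range $1<M<r$ are used; by contrast the product estimate for $\muu D(\uu)$ is routine.
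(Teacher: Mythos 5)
Your proposal is correct and follows essentially the same route as the paper's (very terse) proof: existence and uniqueness of $p\in L_{2,0}(\Omega)$ by standard de Rham theory, then the identification of $\nabla p$ with $\tilde{\gb}$ minus the stress divergence, combined with the $W^{2+j}_M$-regularity of $\uu$ from Lemma~\ref{L:variablecoefficients} and a Leibniz expansion of $\Div(\muu D(\uu))$, to conclude $\nabla p \in W^j_M(\Omega)$ and the estimate. The Ne\v{c}as/Poincar\'e step you single out as the main obstacle (recovering $\|p\|_{1+j,M}$ from $\|\nabla p\|_{j,M}$ for the mean-zero $p$) is exactly what the paper compresses into ``the estimate is then also immediate.''
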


\begin{proof}
From standard theory it is clear that there exists a unique pressure $p\in L_{2,0}(\Omega)$ such
that the above equation is fulfilled in the distributional sense. Now, shifting
the term $\Div(\muu D(\uu))$ to the right hand side, differentiating the right hand
side successively up to the desired order and using the regularity
of $\phi$ and $\uu$ it follows that $\nabla p \in W^j_M(\Omega)$ and
the estimate is then also immediate.
\end{proof}

\section{Linearization}\label{Sec:linear}

Let $p>3\geq d$. Define $X:= \Wpone \times \Wpone \times 
\mathbf{H}_0^{1,2}(\Omega) \times L_{2,0}(\Omega)$,
$X_D :=(\phi_D+\Wpone) \times \Wpone \times \mathbf{H}_0^{1,2}(\Omega) \times L_{2,0}(\Omega)$
and  $Y:= \Wponep \times \Wponep \times \mathbf{H}_0^{1,2}(\Omega) \times L_{2,0}(\Omega)$
with $p\dual = p/(p-1)$ the dual Lebesgue exponent to $p$.

We introduce the nonlinear operator $\operator: X \to Y'$ by
\begin{align*}
\langle \operator(\tilde{\phi},T,\uu,p),&(\psi,\varphi,\vv,q)\rangle
:= \intO\nabla\phi\nabla\psi  + \intO h(\phi) \nabla T \cdot\nabla\psi 
        + \intO\uu \cdot \nabla \phi\,\psi \\
&+\intO\keff\nabla T\cdot\nabla\varphi 
       +\intO(\jjp +\eta\uu)\cdot\nabla T\varphi -\intO f \,\varphi \\
&+ \intO \frac{\muu}{2} D(\uu) : D(\vv)
       +\intO(\jjp +\rho\uu)\cdot\nabla\uu\cdot\vv -\intO p\Div\vv -\intO\gb \cdot\vv - \intO T \ee_g \cdot \vv \\
&+\intO q\Div\uu 
\end{align*}
for all $(\psi,\varphi,\vv,q)\in Y$. Here, $\phi := \phi_D + \tilde{\phi}$.

Since $W^1_{p\dual}(\Omega) \hookrightarrow L_q(\Omega)$ with
$1/q = 1 -1/d - 1/p$ and $p>d$ we have
$2/p +1/q <1$. Also $1/6 + 1/p + 1/q <1$ as well as
$1/p + 1/2 + 1/6 <1$. Thus the above integrals are well defined and also taking
into account the definition of the coefficients, one concludes that 
$\operator: X\rightarrow Y'$ is continuous.
Clearly, $(\phi,T,\uu,p)\in X_D$ is a solution of  system \eqref{eqn}, 
iff $\operator(\tilde{\phi},T,\uu,p)=0$ and $0\leq \phi \leq 1$.

Due to the properties of the coefficients and since $p>3\geq d$ (which in particular
implies $W^1_p(\Omega)\hookrightarrow L_\infty(\Omega)$) $\operator$ is Frechet
differentiable with derivative given by
\begin{equation}\label{Eq:derivative}
\begin{split}
\langle D \operator(\tilde{\phi},&T,\uu,p)(\chi,\Theta,\ww,r),(\psi,\varphi,\vv,q)\rangle
\\
:={}& \intO\nabla\chi\cdot\nabla\psi  + \intO h'(\phi) \chi \nabla T \cdot\nabla\psi 
                           + \intO h(\phi) \nabla \Theta \cdot\nabla\psi \\
       & \hskip 1cm + \intO\uu \cdot \nabla \chi\,\psi + \intO\ww \cdot \nabla \phi\,\psi \\[14pt]
&+\intO\nabla(\keff \Theta)\cdot\nabla\varphi + 
          \intO k'(\phi)(\chi \nabla T- \Theta\nabla \phi)\cdot\nabla \varphi \\
     &  \hskip 1cm +\intO(\partial \jjp +\chi\uu + \eta\ww)\cdot\nabla T\varphi 
       +\intO(\jjp +\eta\uu)\cdot\nabla \Theta\varphi \\[14pt]
&+ \intO \frac{\muu}{2} D(\ww) : D(\vv) + \intO\frac{\mu'(\phi)}{2}\chi D(\uu) : D(\vv) \\
     &\hskip 1cm  +\intO(\partial\jjp +\chi\uu + \rho\ww)\cdot\nabla\uu\cdot\vv 
       +\intO(\jjp +\rho\uu)\cdot\nabla\ww\cdot\vv 
               -\intO r\Div\vv-\intO \Theta \ee_g\cdot \vv \\[14pt]
&+\intO q\Div\ww 
\end{split}
\end{equation}
for all $(\chi,\Theta,\ww,r) \in X, (\psi,\varphi,\vv,q)\in Y$.
Here, $\partial\jjp$ is an abbreviation for
$\partial\jjp= -\nabla\chi - h'(\phi)\chi \nabla T - h(\phi)\nabla\Theta$.

We state the differentiability  and also the Lipschitz continuity 
of $D\operator$ in the 
next lemma.
\bigskip

\begin{lemma}\label{L:differentiability}
Let $X,Y$ and $\operator$ be as above. Then $\operator: X\rightarrow Y'$ is Frechet
differentiable with $D\operator$ given in \eqref{Eq:derivative} above. Moreover,
$D\operator$ is locally Lipschitz continuous, i.e. for 
$U=(\tilde\phi,T,\uu,p)\in X$ and $r_0>0$ there is an $L=L(U,r_0)\geq 0$ such that
$$
\| D\operator (U)- D\operator(V)\|_{{\cal L}(X,Y')} \leq L\|U-V\|_X
$$
for all $V\in B_{r_0}(U)\subseteq X$.
\end{lemma}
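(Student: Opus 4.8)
The plan is to exploit the algebraic structure of $\operator$. Every term in its definition is a bounded multilinear form in the fields $(\tilde\phi,T,\uu,p)$ and their gradients, into which the coefficient functions $h(\phi)$, $\keff$ and $\muu$ enter only as multiplicative factors and are never differentiated (recall $\phi=\phid+\tilde\phi$). Since finite sums, products and compositions of continuously Fr\'echet differentiable maps are again continuously Fr\'echet differentiable, it suffices to treat two building blocks: bounded multilinear forms, and the superposition (Nemytskii) operators $\phi\mapsto g(\phi)$ for $g\in\{h,k,\mu\}$. A bounded $m$-linear map between Banach spaces is $C^\infty$ with derivative given by the product rule, and its continuity is exactly the bookkeeping already used to show that $\operator$ is well defined (the relations $2/p+1/q<1$, $1/6+1/p+1/q<1$ and $1/p+1/2+1/6<1$); on bounded subsets of $X$ such polynomial maps and their derivatives are Lipschitz, so these terms pose no difficulty for either assertion.

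The crux is the differentiability of the Nemytskii operators, where the choice $p>3\ge d$ is essential. I would first prove the auxiliary statement: for $g\in C^2(\rz)$ with $g'$ and $g''$ bounded and $p>d$, the map $N_g\colon W^1_p(\Omega)\to\LOinf$, $N_g(\phi)=g(\phi)$, is continuously Fr\'echet differentiable with $DN_g(\phi)\chi=g'(\phi)\chi$, and $DN_g$ is in fact globally Lipschitz. The embedding $W^1_p(\Omega)\hookrightarrow\LOinf$ gives $\|\chi\|_{0,\infty}\Cle\|\chi\|_{1,p}$, so a pointwise second-order Taylor expansion $g(\phi+\chi)-g(\phi)-g'(\phi)\chi=\tfrac12 g''(\xi)\chi^2$ yields $\|N_g(\phi+\chi)-N_g(\phi)-g'(\phi)\chi\|_{0,\infty}\le\tfrac12\|g''\|_{0,\infty}\|\chi\|_{0,\infty}^2\Cle\|\chi\|_{1,p}^2=o(\|\chi\|_{1,p})$, which is the Fr\'echet property. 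Continuity and Lipschitz dependence of the derivative follow from $\|(g'(\phi_1)-g'(\phi_2))\chi\|_{0,\infty}\le\|g''\|_{0,\infty}\|\phi_1-\phi_2\|_{0,\infty}\|\chi\|_{0,\infty}\Cle\|\phi_1-\phi_2\|_{1,p}\|\chi\|_{1,p}$.

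With these two pieces in hand, each composite term of $\operator$, for instance one written as $\tilde B(N_g(\phi),\cdot)$ with $\tilde B$ a bounded multilinear form accepting an $L^\infty$ coefficient together with the remaining fields and delivering an element of $Y'$, is $C^1$ by the chain and product rules; a direct computation of its derivative reproduces the corresponding line of \eqref{Eq:derivative}, and summing over the finitely many terms gives Fr\'echet differentiability of $\operator$ with derivative \eqref{Eq:derivative}. For the local Lipschitz continuity of $D\operator$ I would fix $U\in X$ and $r_0>0$ and estimate $\|D\operator(U)-D\operator(V)\|_{\mathcal L(X,Y')}$ termwise on $B_{r_0}(U)$ by a telescoping argument: each difference splits into contributions in which either a coefficient factor $g(\phi)$ or $g'(\phi)$, or one of the polynomial field factors, is replaced, and every contribution is bounded by $\|U-V\|_X$ times a constant depending only on $\|U\|_X+r_0$ through the coefficient bounds and the norms of the multilinear forms. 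This yields the desired $L=L(U,r_0)$ and explains why $D\operator$ is only locally (and not globally) Lipschitz: the polynomial terms make the constant grow with the field norms.

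I expect the main obstacle to be the Nemytskii step, namely securing genuine Fr\'echet (not merely G\^ateaux) differentiability together with a continuous derivative. Superposition operators generically fail to be Fr\'echet differentiable between $L^p$-type spaces, and the only reason the argument succeeds here is the uniform pointwise control furnished by $W^1_p(\Omega)\hookrightarrow\LOinf$; all remainder and continuity estimates must be routed through this embedding, which is precisely why the whole analysis is carried out in $W^1_p$ with $p>d$.
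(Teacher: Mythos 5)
Your proposal is correct and takes essentially the same route as the paper: the paper's own (very terse) proof rests on exactly the ingredients you spell out, namely termwise inspection of the integrals, the boundedness and $C^2$-smoothness of the coefficients $h,k,\mu$, and the embedding $W^1_p(\Omega)\hookrightarrow L_\infty(\Omega)$ valid for $p>d$. Your Nemytskii-operator lemma and the decomposition into bounded multilinear forms are just a careful write-up of what the paper compresses into ``follows by inspecting the individual integrals and noting that $p>d$.''
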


\begin{proof}
The differentiability was already discussed above. To show the Lipschitz continuity
we have to estimate
$$
\langle (D\operator (U)- D\operator(V))(\chi,\Theta,\ww,r),(\psi,\varphi,\vv,q)\rangle
  \leq L\|U-V\|_X \|(\chi,\Theta,\ww,r)\|_X \|(\psi,\varphi,\vv,q)\|_Y
$$
which again follows by inspecting the individual integrals and noting that $p>d$.
\end{proof}

The goal of these linearization results is to obtain estimates for a finite element discretization using the framework from~\cite{Rappaz:97}. 
The crucial step now is to show that 
under some smallness assumptions
$D\operator(U)$ is an isomorphism from $X$ to $Y'$.

\begin{proposition}\label{P:iso}
There exists $\delta >0$ such that if $\|(\phi,T,\uu,p)\|_X < \delta$, then
$D\operator(\tilde\phi,T,\uu,p): X \rightarrow Y'$ is an isomorphism.
\end{proposition}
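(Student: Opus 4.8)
The plan is to freeze the evaluation point $U=(\tilde\phi,T,\uu,p)$ and write $D\operator(U)$ as the sum of a fixed, \emph{boundedly} invertible operator $\mathcal L$ and a remainder $\mathcal R(U)$ whose operator norm tends to $0$ as $\|U\|_X\to 0$, so that $D\operator(U)=\mathcal L\,(I+\mathcal L^{-1}\mathcal R(U))$ is inverted by a Neumann series once $\delta$ is small. Throughout I write $\phi=\phi_D+\tilde\phi$ and read the smallness hypothesis as smallness of the whole state, so that $\phi$, $T$, $\uu$ (hence $h(\phi)$, $\nabla\phi$, $D(\uu)$, $\jjp$, \dots) are all small in their respective norms.

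Inspecting~\eqref{Eq:derivative}, I would put into $\mathcal L$ exactly four items: the Laplacian $\intO\nabla\chi\cdot\nabla\psi$, the variable-coefficient term $\intO\keff\nabla\Theta\cdot\nabla\varphi$, the generalized Stokes form $\intO\frac{\muu}{2}D(\ww):D(\vv)-\intO r\Div\vv+\intO q\Div\ww$, and the buoyancy coupling $-\intO\Theta\ee_g\cdot\vv$. Every other term in~\eqref{Eq:derivative} carries at least one factor among $\nabla T$, $\uu$, $\nabla\phi$, $h(\phi)$, $D(\uu)$ or $\jjp$ — in particular the reverse (velocity-into-temperature and velocity-into-concentration) couplings are small — and these I collect into $\mathcal R(U)$. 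Ordering the variables as the scalar block $(\chi,\Theta)$ followed by the fluid block $(\ww,r)$, the retained buoyancy term couples $\Theta$ only into the momentum row, so $\mathcal L$ is block lower triangular: its diagonal consists of $-\Delta:\Wpone\to(\Wponep)'$, of $-\Div(\keff\nabla\,\cdot\,)$, and of the generalized Stokes operator on $\mathbf H_0^{1,2}(\Omega)\times L_{2,0}(\Omega)$, while the single sub-diagonal entry is the bounded buoyancy map.

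I would then verify that each diagonal block is an isomorphism with inverse bounded uniformly over $\|U\|_X<\delta$. The $\chi$-block is an isomorphism by the $L_p$-theory for the Dirichlet Laplacian~\cite[Theorem 3.29]{Ambrosio:18}; the $\Theta$-block is an isomorphism by the $W^1_p$-solvability underlying Lemma~\ref{L:variablecoefficients}, using $\keff\ge k_0>0$ and $\phi\in\Wpone\hookrightarrow C(\overbar\Omega)$; and the fluid block is an isomorphism by the standard saddle-point theory (Korn's inequality, $\muu\ge\mu_0>0$, and the inf-sup condition for $\Div$ on $\mathbf H_0^{1,2}(\Omega)\times L_{2,0}(\Omega)$). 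A block lower triangular operator with boundedly invertible diagonal and bounded off-diagonal is an isomorphism, and since $\|\phi\|_{1,p}\le\|\phi_D\|_{1,p}+\delta$ stays bounded while the ellipticity constants $k_0,\mu_0$ are fixed, $\|\mathcal L^{-1}\|$ is bounded uniformly in $U$.

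Finally I would bound $\|\mathcal R(U)\|_{\mathcal L(X,Y')}$ term by term using H\"older's inequality together with the Sobolev embeddings already recorded before Lemma~\ref{L:differentiability} ($2/p+1/q<1$, $1/6+1/p+1/q<1$, $1/p+1/2+1/6<1$); each term gains a factor $\|\tilde\phi\|_{1,p}$, $\|T\|_{1,p}$ or $\|\uu\|_{1,2}$, so $\|\mathcal R(U)\|\Cle\|U\|_X$. Choosing $\delta$ small enough that $\|\mathcal L^{-1}\|\,\|\mathcal R(U)\|<1$ whenever $\|U\|_X<\delta$ then yields invertibility. I expect the technical heart to be precisely the buoyancy term $-\intO\Theta\ee_g\cdot\vv$: it is the one coupling that does \emph{not} vanish as $U\to0$, so a naive small perturbation of a block-\emph{diagonal} isomorphism fails; keeping this $O(1)$ coupling inside $\mathcal L$ and exploiting the triangular structure (the reverse coupling being small) is the device that closes the argument, and checking the uniform invertibility of the Stokes and variable-coefficient scalar blocks is the remaining bookkeeping.
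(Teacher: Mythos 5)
Your overall architecture --- a block-triangular principal part plus a remainder whose operator norm vanishes as $\|U\|_X\to 0$, inverted by a Neumann series --- is the same strategy as the paper's, which defines a reduced operator ${\cal T}$, shows it is bijective by solving the blocks successively, converts this into an inf-sup constant $\alpha>0$, and then absorbs the remainder ${\cal N}=D\operator(U)-{\cal T}$ as a perturbation of the inf-sup condition (equivalent to your Neumann-series step). The interesting divergence is exactly the term you flagged: the buoyancy coupling $-\intO \Theta\,\ee_g\cdot\vv$. The paper's ${\cal T}$ does \emph{not} contain it, so it sits inside ${\cal N}$, and the paper then asserts that $\|{\cal N}\|_{{\cal L}(X,Y')}$ can be made $\le \alpha/2$ by taking $\|U\|_X$ small --- but that term is independent of $U$ and of order one, so this assertion is not literally correct as written. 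Your choice to keep the buoyancy inside the invertible part and exploit the triangular structure (solve $\chi$, then $\Theta$, then the Stokes system with $\Theta\ee_g$ as data) is precisely the repair of this lapse. Conversely, the paper keeps the coupling $\intO h(\phi)\nabla\Theta\cdot\nabla\psi$ inside ${\cal T}$ (solving in the order Stokes, $\Theta$, $\chi$), while you discard it into ${\cal R}(U)$ using smallness of $h(\phi)$; both are legitimate under the stated hypothesis, since smallness of the whole state gives $\|h(\phi)\|_{0,\infty}\Cle\delta$.

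The one step where your write-up has a genuine gap is the invertibility of the $\Theta$-block. You keep $\intO\keff\nabla\Theta\cdot\nabla\varphi$ as a diagonal entry and justify its invertibility by ``the $W^1_p$-solvability underlying Lemma~\ref{L:variablecoefficients}'' together with ellipticity and boundedness of the coefficient. Lemma~\ref{L:variablecoefficients} is a $W^{2+j}_M$-regularity statement for right-hand sides in $W^j_M$; it says nothing about solvability in $\Wpone$ for functionals in $(\Wponep)'$, which is what you need here. Moreover, for $p\neq 2$, ellipticity plus an $L_\infty$-bound on the coefficient is \emph{not} sufficient for $W^1_p$-solvability (Meyers-type counterexamples); continuity of $\keff$ is essential, so ``the ellipticity constants $k_0,\mu_0$ are fixed and $\|\phi\|_{1,p}$ is bounded'' does not by itself give a uniformly bounded inverse. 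The paper sidesteps this entirely by writing the block as $\intO\nabla(R\Theta)\cdot\nabla\varphi$ with $R\Theta:=\keff\Theta$, i.e.\ as the composition of the Dirichlet Laplacian --- an isomorphism $\Wpone\to(\Wponep)'$ by \cite[Theorem 1.1]{JK95} --- with the multiplication isomorphism $R$ of $\Wpone$; the price, the commutator term $\intO k'(\phi)\Theta\nabla\phi\cdot\nabla\varphi$, carries the small factor $\nabla\phi$ and is shipped to the remainder. Alternatively, under your reading of the smallness hypothesis you can patch it directly: $\|\keff-k(0)\|_{0,\infty}\Cle\delta$, so your block is a small perturbation of the constant-coefficient operator $k(0)(-\Delta)$, again covered by the Jerison--Kenig result. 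With either patch your proof closes, and it then handles the buoyancy term more carefully than the paper's own argument does.
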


\begin{proof}
Given $U=(\tilde{\phi},T,\uu,p)$,
we first consider the reduced operator ${\cal T}: X \rightarrow  Y'$ defined
by
\begin{align*}
\langle {\cal T}(\chi,\Theta,\ww,r),(\psi,\varphi,\vv,q)\rangle
:={}& \intO\nabla\chi\cdot\nabla\psi  + \intO h(\phi) \nabla \Theta \cdot\nabla\psi \\
&+\intO\nabla(R \Theta)\cdot\nabla\varphi  \\
&+ \intO \frac{\muu}{2} D(\ww) : D(\vv) 
               -\intO r\Div\vv\\
&+\intO q\Div\ww 
\end{align*}
with $R: \cringle{W}^1_p(\Omega) \rightarrow \cringle{W}^1_p(\Omega)$, $R\Theta := \keff \Theta$, see
\cite{Rappaz:97}. Due to the properties of $k(\cdot)$ and since $\phi\in W^1_p(\Omega)$,
$R$ is well defined and an isomorphism.

It it easy to show that $\cal T$ is an isomorphism. For this, let
$(l_\phi,l_T,\mathbf{l}_\uu,l_p)\in Y'$ be given. Clearly, by standard theory for
the Stokes equations, there is a unique 
$(\ww, r)\in \mathbf{H}_0^{1,2}(\Omega) \times L_{2,0}(\Omega)$ such that
$$
\langle {\cal T} (0,0,\ww,r),(0,0,\vv,q)\rangle 
=\langle (0,0,\mathbf{l}_\uu,l_p),(0,0,\vv,q)\rangle
$$
for all $(\vv,q)\in \mathbf{H}_0^{1,2}(\Omega) \times L_{2,0}(\Omega)$. 
Next, since the Laplace operator with Dirichlet boundary
condition is an isomorphism from $\cringle{W}^1_p(\Omega)$ 
to $(\cringle{W}^1_{p\dual}(\Omega))'$ \cite[Theorem 1.1]{JK95} and
since $R$ is an isomorphism, there is a unique $\Theta \in \cringle{W}^1_p(\Omega)$
fulfilling
$$
\langle {\cal T} (0,\Theta,0,0),(0,\varphi,0,0)\rangle 
=\langle (0,l_T,0,0),(0,\varphi,0,0)\rangle
$$
for all $\varphi\in \cringle{W}^1_p(\Omega)$. Given these $\Theta,\ww,q$ we finally
can solve the first equation to get a unique $\chi\in\cringle{W}^1_p(\Omega)$ so
that eventually
$$
\langle {\cal T} (\chi,\Theta,\ww,r),(\psi,\varphi,\vv,q)\rangle 
=\langle (l_\phi,l_T,\mathbf{l}_\uu,l_p),(\psi,\varphi,\vv,q)\rangle
$$
for all $(\psi,\varphi,\vv,q)\in Y$. Thus $\cal T$ is bijective. Since
$\cal T$ is continuous, its inverse is also continuous, hence $\cal T$ is
an isomorphism. Because
$X,Y$ are reflexive Banach spaces, it follows that 
there exists $\alpha >0$ such that (see \cite{GuermondErn})
$$
 \mathop{\inf_{U\in X}}_{\|U\|_X=1} \mathop{\sup_{V \in Y}}_{\|V\|_Y=1} 
  \langle {\cal T}U,V\rangle =
 \mathop{\inf_{V\in Y}}_{\|V\|_Y=1} \mathop{\sup_{U \in X}}_{\|U\|_X=1} 
   \langle {\cal T}U,V\rangle = \alpha > 0. 
$$

The remaining part of the operator ${\cal N} := D\operator(\phi,T,\uu,p) - {\cal T}$
reads
\begin{align*}
\langle {\cal N}(\chi,\Theta,\ww,r)&,(\psi,\varphi,\vv,q)\rangle
=  \intO h'(\phi) \chi \nabla T \cdot\nabla\psi 
       + \intO\uu \cdot \nabla \chi\,\psi + \intO\ww \cdot \nabla \phi\,\psi \\ 
+{}& \intO k'(\phi)(\chi \nabla T- \Theta\nabla \phi)\cdot\nabla \varphi 
      +\intO(\partial \jjp +\chi\uu + \eta\ww)\cdot\nabla T\varphi 
       +\intO(\jjp +\eta\uu)\cdot\nabla \Theta\varphi \\ 
+{}& \intO\frac{\mu'(\phi)}{2}\chi D(\uu) : D(\vv) 
     +\intO(\partial\jjp +\chi\uu + \rho\ww)\cdot\nabla\uu\cdot\vv 
       +\intO(\jjp +\rho\uu)\cdot\nabla\ww\cdot\vv -\intO \Theta \ee_g\cdot \vv.
\end{align*}

The norm of $\cal N$ depends continuously on the $X$-norm of
$(\phi,T,\uu,p)$. Thus, for $(\phi,T,\uu,p)$ sufficiently small we get $ \|{\cal N}\|_{{\cal L}(X,Y')} \le \alpha/2$ and
$$
\mathop{\inf_{U\in X}}_{\|U\|_X=1} \mathop{\sup_{V \in Y}}_{\|V\|_Y=1} 
  \langle D\operator(\tilde\phi,T,\uu,p)U,V\rangle
=
 \mathop{\inf_{V\in Y}}_{\|V\|_Y=1} \mathop{\sup_{U \in X}}_{\|U\|_X=1} 
  \langle D\operator(\tilde\phi,T,\uu,p)U,V\rangle
         \geq \alpha - \|{\cal N}\|_{{\cal L}(X,Y')}
         = \frac\alpha2.
$$
This show that $D\operator(\tilde\phi,T,\uu,p)$ is an
isomorphism.
\end{proof}

\section{Finite element discretization and error estimates}\label{Sec:fe}

Let $\{\mathcal{T}_h\}_{h>0}$ be a quasiuniform, shape regular family of conforming
triangulations of $\Omega$ with $\max_{T\in \mathcal{T}_h} \text{diam}(T) \le h$.

To avoid technical details estimating the mismatch of the triangulation
with the exact geometry we (unrealistically) assume that elements on
the boundary are curved and match the boundary exactly. 
Hence
$$
\bigcup_{T\in\mathcal{T}_h}T = \bar{\Omega}.
$$

\begin{remark}
The quasiuniformity of $(\mathcal{T}_h)_{h>0}$ is required to guarantee the
$W^1_p$ stability of the Ritz operator (see below).
\end{remark}

To discretize $\cringle{W}^1_p(\Omega),\cringle{W}^1_{p\dual}(\Omega)$ 
we choose Lagrange elements
of polynomial order $k\geq 1$. Denote this space by
$S_h=S_h(\mathcal{T}_h)$. Other choices, however, are possible
and are restricted only by the assumptions in \cite[Chapter 8]{BrennerScott:08}.
Furthermore, for the Navier--Stokes part of the system
we choose an inf-sup stable pair of elements $\Vvh\times Q_h$ with
$\Vvh \subseteq \cringle\mathbf{H}^{1,2}(\Omega)$, $Q_h\subseteq L_{2,0}(\Omega)$
with the approximation property
\begin{equation}\label{Eq:appox}
\inf_{\vv_h\in\Vvh}\|\uu - \vv_h\|_{1,2} 
  + \inf_{q_h\in Q_h} \|p-q_h\|_{0,2} \leqC h^k (\|\uu\|_{k+1,2} + \|p\|_{k,2}).
\end{equation}

The discrete problem reads: Find $U_h=(\tilde{\phi}_h,T_h,\uu_h,p_h)\in
X_h= S_h\times S_h \times \Vvh \times Q_h$ such that
\begin{equation}\label{Eq:discrete}
 \langle \operator(U_h),V_h \rangle =0
\end{equation}
for all $V_h=(\psi_h,\varphi_h,\vv_h,q_h)\in 
Y_h:= S_h\times S_h \times \Vvh \times Q_h$.

\subsection{Error estimates in the norm of $X$.}

Let $U=(\tilde\phi,T,\uu,p)\in X$ be a solution of $\operator(U)=0$.
To be able to apply the general results from \cite{Rappaz:97}
regarding error estimates, we have to show the following.
\begin{enumerate}[(1)]
  \item  $\operator: X\rightarrow Y'$ is differentiable;
  \item $D\operator$ is locally Lipschitz continuous at $U$;
  \item $D\operator(U): X \rightarrow Y'$ is an isomorphism;
  \item dim $X_h$=dim$Y_h$;
  \item the following discrete inf-sup condition holds:
       {$\displaystyle \mathop{\inf_{U_h\in X_h}}_{\|U_h\|_X=1}
       \mathop{\sup_{V_h\in Y_h}}_{\|V_h\|_Y=1}
           \langle D\operator(U)U_h,V_h\rangle
               = \beta >0.$}
\end{enumerate}

Properties (1)--(3) have been shown in the previous section under some smallness
assumption and (4) holds by construction.
The remaining point thus is the inf-sup condition (5).

As in the proof of Proposition \ref{P:iso} we first consider
the reduced operator ${\cal T}$.
Since we chose a pair of finite element spaces $\Vvh\times Q_h$ which is inf-sup stable for Navier-Stokes and due to Korn's inequality, for the $(\uu,p)$ part of ${\cal T}$ one has:
\begin{align*}
 \mathop{\sup_{\|\vv_h\|_{1,2}=1}}_{\|q_h\|_{0,2}=1}
   \langle {\cal T}(0,0,\ww_h,r_h),(0,0,\vv_h,q_h) \rangle 
&=  \mathop{\sup_{\|\vv_h\|_{1,2}=1}}_{\|q_h\|_{0,2}=1}
 \intO \frac{\muu}{2} D(\ww_h) : D(\vv_h) 
       -\intO r_h\Div\vv_h + \intO q_h\Div\ww_h\\[10pt]
  &  \geq \beta_1\Big( \|\ww_h\|_{1,2} + \|r_h\|_{0,2}\Big)
\end{align*}
for all $\ww_h\in \Vvh, r_h\in Q_h$ and some $\beta_1 >0$.

Next we consider the $\Theta$--part of $\cal T$. From \cite[Thm. 10.1]{Rappaz:97}
we infer that
$$
\mathop{\sup_{\varphi_h\in S_h}}_{\|\varphi_h\|_{1,{p\dual}}=1}
   \langle {\cal T}(0,\Theta_h,0,0),(0,\varphi_h,0,0) \rangle 
= \mathop{\sup_{\varphi_h\in S_h}}_{\|\varphi_h\|_{1,{p\dual}}=1}
\intO \nabla(R\Theta_h)\cdot \nabla \varphi_h \geq \beta_2\|\Theta\|_{1,p} 
$$
for all $\Theta_h\in S_h$ and some $\beta_2 >0$.
The crucial point in the proof in \cite{Rappaz:97}
was the stability of the Ritz-operator
$R_h:\cringle{W}^1_p(\Omega) \rightarrow S_h$ in the $W^1_p$-norm:
$$
\|R_h \Theta \|_{1,p} \leqC \|\Theta \|_{1,p}.
$$
In \cite{Rappaz:97} this result was cited from \cite{RannacherScott:82}, where it was proved
for dimension $d=2$. A much more general result valid for $d=2$ as well
as for $d=3$ and a variety of finite element spaces can be found
in \cite{BrennerScott:08}.

In order to get an inf-sup estimate for the $\chi$-equation, we rescale
the $T$-equation for $\operator$: For $\lambda > 0 $ define
$$
\langle \operator_\lambda (\tilde{\phi},T,\uu,p),(\psi,\varphi,\vv,q)\rangle
 :=
\langle \operator (\tilde{\phi},T,\uu,p),(\psi,\lambda\varphi,\vv,q)\rangle.
$$
All what have been shown for $\operator$ and $D\operator$ remains
valid also for $\operator_\lambda$ and $D\operator_\lambda$ except that
$\beta_2$ becomes $\lambda\beta_2$. Now for the $\chi$-equation we use 
the same result from \cite{Rappaz:97} as for the
$\Theta$-equation and infer for $\chi_h, \Theta\in S_h$ 
\begin{align*}
 \mathop{\sup_{\psi_h\in S_h}}_{\|\psi_h\|_{1,{p\dual}}=1}
   \langle {\cal T}_\lambda(\chi,\Theta_h,0,0),(\psi_h,0,0,0) \rangle 
&\geq 
\mathop{\sup_{\psi_h\in S_h}}_{\|\psi_h\|_{1,{p\dual}}=1}
\Big(\intO \nabla\chi_h\cdot \nabla \psi_h 
  - |\intO h(\phi)\nabla \Theta_h\cdot\nabla\psi_h|\Big)\\[8pt]
&\geq \beta_1\|\chi_h\|_{1,p} - c\|\Theta_h\|_{1,p},
\end{align*}
where the last step follows from the boundedness of $h(\cdot)$ and 
H\"older's inequality.

Putting everything together we arrive at
$$
3\times\sup_{\|V_h\|_Y=1}
   \langle {\cal T_\lambda}U_h,V_h \rangle 
 \geq \beta_1 \|\chi_h\|_{1,p}
 + (\lambda\beta_2-c)\|\Theta_h \|_{1,p} 
  + \beta_3 \Big( \|\ww_h\|_{1,2} + \|r_h\|_{0,2}\Big) \geq \beta \|U_h\|_X
$$
for all $U_h=(\chi_h,\Theta_h,\ww_h,r_h)\in X_h$ and some $\beta>0$, 
provided $\lambda$ is sufficiently big.

The rest of the inf-sup estimate follows exactly as in the 
proof of Proposition \ref{P:iso}: define
${\cal N_\lambda} := D\operator_\lambda(\tilde\phi,T,\uu,p) - {\cal T}_\lambda$. Then we
readily have
$$
 \mathop{\inf_{U_h\in X_h}}_{\|U_h\|_X=1} \mathop{\sup_{V_h \in Y_h}}_{\|V_h\|_Y=1} 
  \langle D\operator_\lambda(\tilde\phi,T,\uu,p)U_h,V_h\rangle
         \geq \beta - \|{\cal N}\|_{{\cal L}{(X,Y')}}
         = \tilde{\beta}>0
$$
if $\|(\phi,T,\uu,p)\|_X$ is small enough.

We are now in a state to apply \cite[Thm. 7.1]{Rappaz:97}.

\begin{theorem}
Let $p>d$ and $S_h,\Vvh,Q_h$ as above. Let $U=(\tilde\phi,T,\uu,p)\in X$
be a solution of $\operator(U)=0$. 
Then there exist constants $\delta, h_0,r_0,C >0$ such that  if
$\|(\phi,T,\uu,p)\|_X < \delta $ 
for $0< h \leq h_0$ the discrete problem 
\eqref{Eq:discrete} has a locally unique solution $X_h\ni U_h \in B_{r_0}(U)\subseteq X$ and
$$
\|U-U_h\|_X \leq C \inf_{V_h\in X_h}\|U-V_h\|_X.
$$
\end{theorem}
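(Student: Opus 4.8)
The plan is to obtain the result as a direct application of the abstract convergence theorem of Caloz and Rappaz, namely \cite[Thm.~7.1]{Rappaz:97}, whose hypotheses are precisely the five conditions (1)--(5) collected above. Four of these have already been secured: differentiability and local Lipschitz continuity of $D\operator$ at $U$ (Lemma~\ref{L:differentiability}), the isomorphism property of $D\operator(U)\colon X\to Y'$ under the smallness assumption $\|(\phi,T,\uu,p)\|_X<\delta$ (Proposition~\ref{P:iso}), and $\dim X_h=\dim Y_h$ by construction; the discrete inf-sup condition (5) has just been verified. Thus the core of the argument is essentially complete and only the bookkeeping needed to match the abstract hypotheses remains.

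First I would record that the discretization is \emph{conforming}: the Lagrange spaces satisfy $S_h\subseteq\Wpone\cap\Wponep$ and the Stokes pair satisfies $\Vvh\times Q_h\subseteq\mathbf{H}_0^{1,2}(\Omega)\times L_{2,0}(\Omega)$, so that $X_h\subseteq X$ and $Y_h\subseteq Y$. Since the same form $\operator$ is used in the continuous equation $\operator(U)=0$ and in the discrete equation~\eqref{Eq:discrete}, and since $U$ is an exact solution, one has the Galerkin-orthogonality-type consistency $\langle\operator(U),V_h\rangle=0$ for every $V_h\in Y_h$; this is exactly the vanishing-residual input required by \cite[Thm.~7.1]{Rappaz:97}. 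I would also note that the rescaling $\operator\mapsto\operator_\lambda$ used to obtain (5) does not alter the solution set, because $\lambda>0$ only multiplies one test-function component and induces an equivalent norm on $Y$; hence the discrete stability proved for $D\operator_\lambda$ transfers back to the original problem with a constant $\tilde\beta$ independent of $h$.

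The last ingredient is the approximation property $\inf_{V_h\in X_h}\|U-V_h\|_X\to 0$ as $h\to 0$, which follows from standard finite element interpolation estimates combined with the regularity of $U$ established in Corollary~\ref{C:regular} (and, if more is assumed on the data, Corollary~\ref{Coroll:higher}), together with the velocity--pressure approximation property~\eqref{Eq:appox}. With conformity-induced consistency, uniform discrete stability, local Lipschitz continuity of $D\operator$, and this approximation property in hand, \cite[Thm.~7.1]{Rappaz:97} yields, for all $h\le h_0$ with $h_0$ small enough, the existence of a locally unique discrete solution $U_h\in B_{r_0}(U)$ together with the quasi-optimal estimate $\|U-U_h\|_X\le C\inf_{V_h\in X_h}\|U-V_h\|_X$.

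I expect the genuine obstacle to lie not in this final assembly but in the $h$-uniformity of the discrete inf-sup constant underlying condition (5): the arguments for the $\chi$- and $\Theta$-components rest on the $W^1_p$-stability of the Ritz projection $R_h\colon\Wpone\to S_h$, i.e.\ $\|R_h\Theta\|_{1,p}\leC\|\Theta\|_{1,p}$ with a constant independent of $h$, which in turn requires the quasiuniformity of $\{\mathcal{T}_h\}_{h>0}$ and the general stability results of \cite{BrennerScott:08} valid for $d\in\{2,3\}$. Once that uniform stability is granted, choosing $\lambda$ large enough and $\|(\phi,T,\uu,p)\|_X$ small enough absorbs the lower-order coupling terms ${\cal N}_\lambda$ and closes the estimate, so that the invocation of the abstract theorem becomes routine.
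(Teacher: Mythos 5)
Your proposal follows essentially the same route as the paper: the paper's entire proof consists of the verification of conditions (1)--(5) carried out immediately before the theorem statement, followed by the invocation of \cite[Thm.~7.1]{Rappaz:97}, which is exactly what you do (your added remarks on conformity and on the harmlessness of the $\lambda$-rescaling are correct and, if anything, make the argument slightly more explicit than the paper's). One minor caveat: the approximation property $\inf_{V_h\in X_h}\|U-V_h\|_X\to 0$ should be justified by density of smooth functions in $X$ together with conformity of $S_h$, $\Vvh$, $Q_h$, rather than by Corollary~\ref{C:regular}, since that corollary requires smallness of the \emph{data} in $L_p$, which is not among the hypotheses of this theorem (only $\|(\phi,T,\uu,p)\|_X<\delta$ is assumed).
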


\begin{corollary}
Let $p>3\geq d$ and $j\in\nz_0$. Furthermore, let $k=1+j$. The spaces $X,Y,X_h=Y_h$ are as above.
Let $\partial\Omega, k(\cdot), \mu(\cdot)$ fulfill the regularity assumptions
of Corollary \ref{Coroll:higher} and  also
$$
\phid \in W^{2+j}_p(\Omega), \quad f,\gb \in W^j_p(\Omega).
$$
Then there are constants $\delta,h_0,r_0,C>0$ such that the following holds: if
$$
   \|\phid\|_{2,p} + \|f\|_{0,p} + \|\gb\|_{0,2} < \delta
$$
then there is a solution $U\in X$ of problem \eqref{eqn}, i.e. $\operator(U)=0$,  fulfilling
$$
\|\phi\|_{2+j,p} + \|T\|_{2+j,p} + \|\uu\|_{2+j,2}
   \leq C \Big(\|\phid\|_{2+j,p} + \|f\|_{j,p} + \|\gb\|_{j,2}\Big).
$$
Moreover, for $0< h \leq h_0$ the discrete problem 
\eqref{Eq:discrete} has a locally unique solution $X_h\ni U_h \in B_{r_0}(U)\subseteq X$ and
the  following error estimate holds:
$$
\|\phi-\phi_h\|_{1,p} + \|T-T_h\|_{1,p}
  + \|\uu - \uu_h\|_{1,2} + \|p-p_h\|_{0,2} \leqC h^k.
$$
\end{corollary}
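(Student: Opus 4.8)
The plan is to assemble the regularity theory of Section~\ref{Sec:regularity} with the abstract error estimate of the preceding theorem, verifying along the way that a single choice of $\delta$ simultaneously triggers all the smallness hypotheses involved. Write $D:=\|\phid\|_{2,p}+\|f\|_{0,p}+\|\gb\|_{0,2}$ throughout.

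First, for the existence and regularity claim, I would take $\delta$ no larger than the threshold $F_0$ of Corollary~\ref{C:regular} and invoke that corollary to produce a solution $(\phi,T,\uu)$ of~\eqref{eqn} with $\|\phi\|_{2,p}+\|T\|_{2,p}+\|\uu\|_{2,2}\leC D$. Since the data additionally satisfy $\phid\in W^{2+j}_p(\Omega)$ and $f,\gb\in W^j_p(\Omega)$, and the coefficients and boundary meet the hypotheses of Corollary~\ref{Coroll:higher}, that corollary bootstraps the solution to $\phi,T\in W^{2+j}_p(\Omega)$ and $\uu\in\mathbf{H}^{j+2}(\Omega)$ together with the stated a priori bound. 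The associated pressure $p\in L_{2,0}(\Omega)\cap H^{1+j}(\Omega)$, with $\|p\|_{1+j,2}\leC\|\tilde\gb\|_{j,2}$, is then furnished by Lemma~\ref{L:pressure} taking $M=2$ (legitimate since $p>3$ gives $W^j_p(\Omega)\hookrightarrow H^j(\Omega)$), so that $U=(\tilde\phi,T,\uu,p)\in X$ solves $\operator(U)=0$ and satisfies the first displayed estimate.

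Next, for the error estimate I would check the five hypotheses of the preceding theorem at this $U$. Items (1) and (2) are exactly Lemma~\ref{L:differentiability}, and item (4) holds by construction since $X_h=Y_h$. For items (3) and (5)—that $D\operator(U)$ is an isomorphism and that the discrete inf-sup constant is positive—the arguments of Proposition~\ref{P:iso} and of the inf-sup computation preceding the theorem require $\|(\phi,T,\uu,p)\|_X$ to be small. The key observation is that the $X$-norm is dominated by the stronger regularity norms, $\|\tilde\phi\|_{1,p}+\|T\|_{1,p}+\|\uu\|_{1,2}+\|p\|_{0,2}\leC D$, so that shrinking $\delta$ forces $\|U\|_X$ below the thresholds of Proposition~\ref{P:iso} and of the discrete inf-sup estimate. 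Thus for $\delta$ small enough all five hypotheses hold, and the preceding theorem supplies $h_0,r_0,C>0$ and, for $0<h\le h_0$, a locally unique discrete solution $U_h\in B_{r_0}(U)$ with $\|U-U_h\|_X\leC\inf_{V_h\in X_h}\|U-V_h\|_X$.

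Finally, I would convert quasi-optimality into the rate $h^k$ by inserting the approximation properties of the finite element spaces. For the Lagrange spaces $S_h$ of order $k=1+j$ one has $\inf_{\psi_h\in S_h}\|\phi-\psi_h\|_{1,p}\leC h^k\|\phi\|_{k+1,p}$ and likewise for $T$, which is legitimate precisely because $\phi,T\in W^{2+j}_p(\Omega)=W^{k+1}_p(\Omega)$; for the velocity-pressure pair I would use~\eqref{Eq:appox}, bounding the best approximation error by $h^k(\|\uu\|_{k+1,2}+\|p\|_{k,2})$, which is finite since $\uu\in\mathbf{H}^{j+2}(\Omega)=\mathbf{H}^{k+1}(\Omega)$ and $p\in H^{1+j}(\Omega)=H^k(\Omega)$. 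Combining these with the quasi-optimal bound yields the claimed $O(h^k)$ estimate. The only genuine obstacle is the bookkeeping of smallness: one must ensure that the single data bound $D<\delta$ simultaneously lies below $F_0$, below the isomorphism threshold of Proposition~\ref{P:iso}, and below the discrete inf-sup threshold, which is precisely what the linear control $\|U\|_X\leC D$ of the solution by the data guarantees.
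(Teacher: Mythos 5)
Your proposal is correct and follows essentially the same route as the paper's (much terser) proof: invoke Corollary~\ref{C:regular} and Corollary~\ref{Coroll:higher} together with Lemma~\ref{L:pressure} for existence and regularity of $(\tilde\phi,T,\uu,p)$, then apply the preceding theorem for the locally unique discrete solution and quasi-optimality, and finish with the approximation properties of $S_h$ and $\Vvh\times Q_h$. The details you spell out --- the bridge from data smallness $D<\delta$ to the solution smallness $\|U\|_X \Cle D$ needed for Proposition~\ref{P:iso} and the discrete inf-sup condition, and the choice $M=2$ in Lemma~\ref{L:pressure} to get $p\in H^{k}(\Omega)$ for the estimate~\eqref{Eq:appox} --- are exactly what the paper's phrases ``possibly reducing $\delta$ further'' and ``the rest follows by\dots'' leave implicit.
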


\begin{proof}
By the above theorem we know that for sufficiently small $\delta >0$ there is a
solution $(\phi,T,\uu) \in X\cap W^2_p(\Omega) \times W^2_p(\Omega) \times 
\mathbf{H}^2(\Omega)$. Possibly reducing $\delta$ further, the above theorem
guarantees the existence of a locally unique discrete solution $U_h$ and
its quasi-optimality $\|U-U_h\|_X \leqC \inf_{V_h\in X_h}\|U-V_h\|_X$.
The rest follows by the approximation property of the finite element spaces
and the (possibly) higher regularity shown in Corollary \ref{Coroll:higher} and
Lemma \ref{L:pressure}.
\end{proof}

\subsection{Error estimates in weaker norms.}

Error estimates in $L_p, \mathbf{L}_2$ can be proved by using rather standard
duality techniques. To this end,
let us introduce the bilinear form $b(\cdot,\cdot): X\times Y \rightarrow \rz$ for
the solution $U=( \tilde{\phi},T,\uu,p)$:
$$
b(\tilde U,V) := 
\langle D \operator ( \tilde{\phi},T,\uu,p) \tilde U,V \rangle
$$
and the associated operators ${\cal B}: X \rightarrow Y',\; 
{\cal B}^*:Y \rightarrow X'$ by
$$
\langle {\cal B}\tilde U,V \rangle_{Y',Y} = b(\tilde U,V) = \langle \tilde U,{\cal B}^*V\rangle_{X,X'}
$$
for all $\tilde U\in X, V\in Y$.

Recall that ${\cal B}$ is an isomorphism, iff ${\cal B}^*$ is an isomorphism.
As shown in Proposition \ref{P:iso} this is for instance the case, 
if $\|(\phi,T,\uu,p)\|_X$ is small enough, which we assume hereafter.

\bigskip

Now, choose $G=(g_\phi,g_T,\mathbf{g}_\uu,0)\in L_{p\dual}(\Omega)\times
L_{p\dual}(\Omega)\times \mathbf{L}_2(\Omega)\times L_{2,0}(\Omega) \subseteq X'$ such that
$\|g_\phi\|_{0,p\dual} = \|g_T\|_{0,p\dual}=\|\mathbf{g}_\uu\|_{0,2}=1$ and
$$
\langle g_\phi,\phi-\phi_h\rangle = \|\phi-\phi_h\|_{0,p},\quad
\langle g_T,T-T_h\rangle = \|T-T_h\|_{0,p},\quad
\langle \mathbf{g}_\uu,\uu-\uu_h\rangle = \|\uu-\uu_h\|_{0,2}.
$$

Let $W\in Y$ be the unique solution of the dual problem (which exists, since ${\cal B}^*$
is an isomorphism)
\begin{equation}\label{Eq:dual}
b(\tilde U,W) = \langle \tilde U, {\cal B}^*W \rangle_{X,X'} = \langle \tilde U,G\rangle_{X,X'}
\end{equation}
for all $\tilde U\in X$.

Let $W_h\in Y_h$. As in  \cite{Rappaz:97} we calculate
\begin{equation}\label{Eq:aux}
|\langle U-U_h, G\rangle| = |b(U-U_h,W)| \leq | b(U-U_h,W-W_h)| + |b(U-U_h,W_h)|
\end{equation}
for $U,U_h$ the continuous and discrete solution, respectively.
The last term on the right side can be treated as 
follows
\begin{align*}
b(U-U_h,W_h) &= \langle D \operator ( \tilde{\phi},T,\uu,p)( U-U_h),W_h \rangle \\[8pt]
&= \langle -\operator (U)
+\operator (U_h)
   -D \operator (U) (U_h-U),W_h \rangle \\[8pt]
& = \langle (D\operator ( \zeta(t))- D \operator (U))( U_h-U), W_h\rangle
\end{align*}
for some $t\in[0,1]$ with $\zeta(t) = (1-t)U + t\, U_h$.
Thus  we get
$$
|b(U-U_h,W_h)| \leq L \|U-U_h\|^2_X \|W_h\|_Y,
$$
if $\|U-U_h\|_X$ is sufficiently small and with $L$ the Lipschitz constant of $D\operator$.
The above estimate may be viewed as a substitute for the orthogonality of the error
in the linear case.
Using this estimate in Eq. \eqref{Eq:aux} and the boundedness of $b(\cdot,\cdot)$
(which follows from Lemma \ref{L:differentiability}) we arrive at 
\begin{equation}\label{Eq:aux2}
|\langle U-U_h,G\rangle | \leqC \|U-U_h\|_X\|W-W_h\|_Y + \|U-U_h\|^2_X
  \big( \|W-W_h\|_Y + \|W\|_Y \big).
\end{equation}
It remains to estimate $\|W-W_h\|_Y$, which will be accomplished by
a regularity result for $W$.

\begin{lemma}\label{L:regdual}
Let the solution $U=(\tilde\phi,T,\uu,p)$ fulfill
$$ 
\tilde\phi, T \in W^2_p(\Omega) \cap \cringle{W}^1_p(\Omega), \quad
 \uu \in \mathbf{H}^2(\Omega) \cap \Vv.
$$
Then the dual solution $W=(\psi,\varphi,\vv,q)$ from Eq. \eqref{Eq:dual} is regular, i.e.
$$
\psi,\varphi \in W^2_{p\dual}(\Omega), \quad \vv \in  \mathbf{H}^2(\Omega), q\in H^1(\Omega)
$$
and 
$$
\| \psi \|_{2,p\dual} +  \| \varphi \|_{2,p\dual} + \| \vv \|_{2,2} + \|q\|_{1,2} \leq C
$$
independent of $h>0$.
\end{lemma}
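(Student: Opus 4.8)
The plan is to read off the elliptic system solved by the dual variable $W=(\psi,\varphi,\vv,q)$ by transposing the bilinear form $b(\tilde U,W)=\langle D\operator(U)\tilde U,W\rangle$ in \eqref{Eq:derivative}, and then to bootstrap from the baseline regularity $W\in Y$ using the regularity results already at hand. Since $\mathcal B^*$ is an isomorphism (Proposition \ref{P:iso}) and the components of $G$ have unit norm, I start from $\|W\|_Y\lesssim\|G\|_{X'}\lesssim 1$, that is $\psi,\varphi\in\Wponep$, $\vv\in\mathbf H^1(\Omega)$ with $\Div\vv=0$, and $q\in L_{2,0}(\Omega)$, all controlled independently of $h$. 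Throughout, the coefficients supplied by $U$ are as regular as $\phi,T\in W^2_p(\Omega)\hookrightarrow C^1(\overline\Omega)$ and $\uu\in\mathbf H^2(\Omega)\hookrightarrow\mathbf L_\infty(\Omega)$ permit; in particular $\nabla\phi,\nabla T\in L_\infty(\Omega)$ and $\jjp,\eta\uu,\rho\uu\in\mathbf L_\infty(\Omega)$, while $D^2T\in L_p(\Omega)$ and $D^2\uu\in\mathbf L_2(\Omega)$ are the least regular. The smallness of $\|U\|_X$ keeps all constants uniform.

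First I would test with $\tilde U=(\chi,0,0,0)$, $\chi\in C_0^\infty(\Omega)$, collecting the terms of \eqref{Eq:derivative} that are linear in $\chi$ or $\nabla\chi$ (these span several blocks) and integrating by parts. Using $\Div\uu=0$, the convection term $\int_\Omega\uu\cdot\nabla\chi\,\psi$ becomes the adjoint transport $-\int_\Omega\chi\,\uu\cdot\nabla\psi$, while the two terms carrying $\nabla\chi$ produce, after one integration by parts, the second order coefficients $D^2T\,\varphi$ and $D^2\uu\cdot\vv$. A routine H\"older/Sobolev count --- using $D^2T\in L_p$, $\varphi\in L_{(p\dual)^*}$ with $\tfrac1{(p\dual)^*}=\tfrac1{p\dual}-\tfrac1d$, and $D^2\uu\in\mathbf L_2$, $\vv\in\mathbf L_6$ --- shows the resulting right-hand side $\tilde f_\psi$ lies in $L_{p\dual}(\Omega)$ precisely because $p>d$. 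Hence $\psi\in\Wponep$ solves $-\Delta\psi=\tilde f_\psi$, and elliptic regularity (\cite[Theorem 3.29]{Ambrosio:18}) gives $\psi\in W^2_{p\dual}(\Omega)$. Testing next with $\tilde U=(0,\Theta,0,0)$, the principal part is $\int_\Omega k(\phi)\nabla\varphi\cdot\nabla\Theta$; I would rewrite the equation as $-\Delta\varphi=k(\phi)^{-1}\big(\tilde f_\varphi+k'(\phi)\nabla\phi\cdot\nabla\varphi\big)$, check as above that the right-hand side is in $L_{p\dual}(\Omega)$, and invoke the same regularity result to get $\varphi\in W^2_{p\dual}(\Omega)$. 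Rewriting $-\Div(k(\phi)\nabla\varphi)$ through the Laplacian deliberately sidesteps the $H^1$ hypothesis of Lemma \ref{L:variablecoefficients}, which is unavailable here because $p\dual<2$.

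With $\psi,\varphi$ now in $W^2_{p\dual}(\Omega)\hookrightarrow L_s(\Omega)$ for some $s>2$, I would turn to the velocity block by testing with $\tilde U=(0,0,\ww,r)$, $\ww\in\mathcal D(\Omega)$. The divergence-free choice of $\ww$ annihilates $\int_\Omega q\Div\ww$, leaving $\int_\Omega\tfrac{\mu(\phi)}2D(\vv):D(\ww)=\int_\Omega\tilde\gb_\vv\cdot\ww$, exactly of the form \eqref{Eq:weakU}. After transposing the term containing $\nabla\ww$, the remainder $\tilde\gb_\vv$ consists of products such as $\nabla\phi\,\psi$, $\eta\nabla T\,\varphi$, $\rho(\nabla\uu)\vv$ and $(\jjp+\rho\uu)\cdot\nabla\vv$; using the improved integrability of $\psi,\varphi$ together with $\vv\in\mathbf H^1\hookrightarrow\mathbf L_6$ and $\nabla\vv\in\mathbf L_2$, one checks $\tilde\gb_\vv\in\mathbf L_2(\Omega)$. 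Lemma \ref{L:variablecoefficients} with $j=0$, $M=2$, $r=p>d$ (so that $1<M,M\dual=2<r$) then yields $\vv\in\mathbf H^2(\Omega)$, while the $r$-test gives $\Div\vv=0$ and Lemma \ref{L:pressure} supplies $q\in H^1(\Omega)$ with $\|q\|_{1,2}\lesssim\|\tilde\gb_\vv\|_{0,2}$. Assembling the three estimates and recalling that every constant depends only on the small norm $\|U\|_X$ and the ($h$-independent) regularity norms of $U$, and that $\|G\|_{X'}\lesssim 1$, yields the claimed bound independent of $h$.

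The main obstacle is bookkeeping rather than conceptual: the derivative $D\operator(U)$ has many terms, and after transposition several of them carry the least regular coefficients $D^2T$ and $D^2\uu$ paired against the dual unknowns at their lowest available integrability. Verifying that each such product lands in $L_{p\dual}(\Omega)$ (for the scalar equations) or $\mathbf L_2(\Omega)$ (for the velocity) hinges on the identity $\tfrac1{(p\dual)^*}=\tfrac1{p\dual}-\tfrac1d$ and on $p>d$, and --- crucially --- on performing the bootstrap in the right order: the scalar fields $\psi,\varphi$ must be lifted to $W^2_{p\dual}(\Omega)$ before the velocity estimate, since for large $p$ (namely $p>2d$) their baseline embedding $\Wponep\hookrightarrow L_{(p\dual)^*}(\Omega)$ does not reach $L_2(\Omega)$, which is exactly what is needed to place $\tilde\gb_\vv$ in $\mathbf L_2(\Omega)$ and close the argument.
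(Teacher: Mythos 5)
Your proposal is correct and follows essentially the same route as the paper: transpose $D\operator(U)$ to obtain the strong-form dual system, lift $\psi$ and then $\varphi$ to $W^2_{p\dual}(\Omega)$ by scalar elliptic regularity (rewriting the $\varphi$-equation through the plain Laplacian after dividing by $k(\phi)$, exactly as the paper does), and only then recover $\vv\in\mathbf{H}^2(\Omega)$ and $q\in H^1(\Omega)$ via Lemma \ref{L:variablecoefficients} with $M=2$ and Lemma \ref{L:pressure}. The only deviation is that the paper first establishes an intermediate estimate $\vv\in W^2_{3/2}(\Omega)$ before treating the scalar equations, whereas you handle the velocity-coupled terms in the $\psi$- and $\varphi$-equations with the baseline regularity $\vv\in\mathbf{L}_6$, $\nabla\vv\in\mathbf{L}_2$, $D^2\uu\in\mathbf{L}_2$ alone, which indeed suffices because $p>3$ gives $p\dual<3/2$ and hence $L_{3/2}(\Omega)\hookrightarrow L_{p\dual}(\Omega)$.
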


\begin{proof}
The dual solution $W=(\psi,\varphi,\vv,q)$ fulfills $B^\ast W =G$. This
implies in the distributional sense:
\begin{equation}\label{Eq:regdualphi} 
\begin{split}
-\Delta \psi &+ h'(\phi)\nabla T \cdot \nabla \psi
   -\uu\cdot\nabla \psi  \\
&+ k'(\phi)\nabla T\cdot\nabla \varphi + \uu\cdot \nabla T \varphi 
+\Div(\varphi \nabla T) - h'(\phi)|\nabla T|^2 \varphi \\
&-h'(\phi)\vv^T(\nabla\uu \nabla T) 
+(\uu\cdot\nabla\uu)\cdot\vv 
+ \Div(\vv^T \nabla\uu) 
+ \frac{\mu'(\phi)}{2} D(\uu):D(\vv) = g_\phi,\\
\end{split}
\end{equation}
\begin{equation}\label{Eq:regdualT} 
\begin{split}
-\keff \Delta\varphi - k'(\phi)\nabla\phi\cdot\nabla\varphi &+ \Div(h(\phi)\varphi\nabla T)
-\ee_g \cdot \vv \\
&-(\jj +\eta\uu)\cdot\nabla\varphi-\Div(h(\phi)\nabla\psi) + \Div(h(\phi)\vv^T\nabla\uu) = g_T,
\end{split}
\end{equation}
\begin{equation}\label{Eq:regdualu} 
\begin{split}
 -\Div(\mu(\phi)D(\vv)) - \nabla \vv(\jj + \rho\uu) + \rho\nabla \uu^T \vv -\nabla q
  & + \psi\nabla\phi +\eta \varphi\nabla T= \mathbf{g}_\uu, \qquad -\Div\vv = 0,
\end{split}
\end{equation}
or equivalently
\begin{equation}\label{Eq:regdualphi2} 
\begin{split}
-\Delta \phi = l_\phi :={} &g_\phi -h'(\phi)\nabla T \cdot \nabla \psi
   +\uu\cdot\nabla \psi  \\
&- k'(\phi)\nabla T\cdot\nabla \varphi - \uu\cdot \nabla T \varphi 
-\Div(\varphi \nabla T) + h'(\phi)|\nabla T|^2 \varphi \\
&+h'(\phi)\vv^T(\nabla\uu \nabla T) -(\uu\cdot\nabla\uu)\cdot\vv 
- \Div(\vv^T \nabla\uu) - \frac{\mu'(\phi)}{2} D(\uu):D(\vv),
\end{split}
\end{equation}
\begin{equation}\label{Eq:regdualT2} 
\begin{split}
- \Delta\varphi = l_T:={}& \frac{1}{\keff}\Big(g_T+ k'(\phi)\nabla\phi\cdot\nabla\varphi 
        - \Div(h(\phi)\varphi\nabla T) +(\jj +\eta\uu)\cdot\nabla\varphi - \ee_g \cdot \vv\\
&+\Div(h(\phi)\nabla\psi) - \Div(h(\phi)\vv^T\nabla\uu)\Big),
\end{split}
\end{equation}
\begin{equation}\label{Eq:regdualu2} 
\begin{split}
 -\Div(\mu(\phi)D(\vv)) -\nabla q =\mathbf{l}_\uu:={}&\mathbf{g}_\uu+ \nabla \vv(\jj + \rho\uu) 
      - \rho\nabla \uu^T \vv 
   - \psi\nabla\phi -\eta \varphi\nabla T, \qquad  \Div\vv = 0.
\end{split}
\end{equation}
%

We already now that there is a unique solution $W\in Y$ fulfilling $\|W\|_Y \leq C$, which
means $\|\psi\|_{1,p\dual}+\|\varphi\|_{1,p\dual}+ \|\vv\|_{1,2}+\|q\|_{0,2} \leq C$.
Let us start inspecting Eqs.~\eqref{Eq:regdualu2}. 
Following from the assumption on $(\phi,T,\uu)$ and because 
$W^1_{p\dual}(\Omega) \hookrightarrow L_{q}(\Omega)$ with $q \geq 3/2$ we conclude
$\mathbf{l}_\uu \in \mathbf{L}_{3/2}(\Omega)$. From Lemma \ref{L:variablecoefficients} one infers 
$\vv \in W^2_{3/2}(\Omega)$.

Next, it is readily seen that $l_\phi\in L_{p\dual}(\Omega)$ (note that $p\dual < 2$). 
We show this for the worst term occurring in $l_\phi$, namely $\Div(\varphi\nabla T)$:
$$
\Div(\varphi\nabla T) = \underbrace{\nabla\varphi}_{\in L_{p\dual}(\Omega)}
  \cdot\underbrace{\nabla T}_{\in L_\infty(\Omega)} 
  + \underbrace{\varphi}_{\in L_q(\Omega)}\underbrace{\Delta T}_{\in L_p(\Omega)}
$$
with $1/q = 1/{p\dual} - 1/d$ by embedding and then
$$
\varphi \Delta T \in L_s(\Omega)
$$
with $1/s = 1/q + 1/p = 1/{p\dual} -1/d + 1/p = 1 -1/d \leq 2/3$ 
so that $s\geq 3/2 \geq p\dual$.
From $l_\phi\in L_{p\dual}(\Omega)$ we conclude $\psi \in W^2_{p\dual}(\Omega)$.

With this information one checks that also $l_T\in L_{p\dual}(\Omega)$ and therefore
$\varphi \in W^2_{p\dual}(\Omega)$.

As a last step, we go back to Eqs.~\eqref{Eq:regdualu2}. 
Knowing that
$\psi,\varphi \in W^2_{p\dual}(\Omega)\hookrightarrow L_q(\Omega)$ with $q > 3$ we
finally conclude $\mathbf{l}_\uu \in \mathbf{L}_2(\Omega)$ and so 
$\vv\in \mathbf{H}^2(\Omega)$.

In the above arguments it is of course understood that the corresponding norms
are bounded by the right-hand side.
\end{proof}

Putting everything together, we arrive at the following error estimate.

\begin{theorem}
Let $p>3$ and $U=(\tilde\phi,T,\uu,p)\in X$ be a solution of the continuous system 
Eq. \eqref{eqn} with
$$ 
\tilde\phi, T \in W^2_p(\Omega) \cap \cringle{W}^1_p(\Omega), \quad
 \uu \in \mathbf{H}^2(\Omega) \cap \Vv
$$
and $U_h=(\tilde\phi_h,T_h,\uu_h,p_h)\in X_h$ the corresponding discrete solution from
Eq. \eqref{Eq:discrete}, sufficiently close to $U$.  Then
$$
  \|\phi-\phi_h\|_{0,p} +  \|T-T_h\|_{0,p} + \|\uu-\uu_h\|_{0,2}
  \leqC \|U-U_h\|_X^2 + h \|U-U_h\|_X.
$$
\end{theorem}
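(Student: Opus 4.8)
The plan is to run a duality (Aubin--Nitsche) argument, exploiting the dual regularity already established in Lemma~\ref{L:regdual}. The weaker-norm quantity on the left is precisely $\langle U-U_h,G\rangle$ for the functional $G=(g_\phi,g_T,\mathbf{g}_\uu,0)$ fixed just before Eq.~\eqref{Eq:dual}, whose components are normalized so that this pairing equals $\|\phi-\phi_h\|_{0,p}+\|T-T_h\|_{0,p}+\|\uu-\uu_h\|_{0,2}$. Hence it suffices to bound $|\langle U-U_h,G\rangle|$, and the structural estimate~\eqref{Eq:aux2} already reduces this to controlling $\|W-W_h\|_Y$ and $\|W\|_Y$, where $W$ solves the dual problem~\eqref{Eq:dual} and $W_h\in Y_h$ is at our disposal.

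First I would invoke Lemma~\ref{L:regdual}: under the stated regularity of $U$, the dual solution $W=(\psi,\varphi,\vv,q)$ satisfies $\psi,\varphi\in W^2_{p\dual}(\Omega)$, $\vv\in\mathbf{H}^2(\Omega)$, $q\in H^1(\Omega)$ with $\|\psi\|_{2,p\dual}+\|\varphi\|_{2,p\dual}+\|\vv\|_{2,2}+\|q\|_{1,2}\leqC 1$, the bound being independent of $h$. In particular $\|W\|_Y\leqC 1$.

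Next I would choose $W_h\in Y_h$ as a quasi-optimal approximation of $W$ and estimate $\|W-W_h\|_Y$ through the finite element approximation properties. For the scalar components I would use a Scott--Zhang (or Cl\'ement) quasi-interpolant into $S_h$, which is well defined on $\cringle{W}^1_{p\dual}(\Omega)$ and yields $\|\psi-\psi_h\|_{1,p\dual}+\|\varphi-\varphi_h\|_{1,p\dual}\leqC h\,(\|\psi\|_{2,p\dual}+\|\varphi\|_{2,p\dual})$; for the velocity--pressure pair I would invoke the approximation property~\eqref{Eq:appox}, giving $\|\vv-\vv_h\|_{1,2}+\|q-q_h\|_{0,2}\leqC h\,(\|\vv\|_{2,2}+\|q\|_{1,2})$. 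Combining with Lemma~\ref{L:regdual} yields $\|W-W_h\|_Y\leqC h$.

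Inserting $\|W-W_h\|_Y\leqC h$ and $\|W\|_Y\leqC 1$ into~\eqref{Eq:aux2} gives
\[
|\langle U-U_h,G\rangle|\leqC h\,\|U-U_h\|_X+\|U-U_h\|_X^2\,(h+1)\leqC h\,\|U-U_h\|_X+\|U-U_h\|_X^2,
\]
which is exactly the claimed bound once the left-hand side is identified with the sum of weaker norms. The main obstacle is the dual regularity carried out in Lemma~\ref{L:regdual}; once that is granted, the only remaining subtlety is that for $p>3$ the conjugate exponent $p\dual<3/2$ makes nodal Lagrange interpolation ill defined on $W^2_{p\dual}(\Omega)$ in three dimensions, so a stable quasi-interpolant (consistent with the quasiuniformity assumed on $(\mathcal{T}_h)_{h>0}$) must be used to obtain the $O(h)$ bound in the $W^1_{p\dual}$-norm. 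Everything else is a direct assembly of~\eqref{Eq:aux2}, Lemma~\ref{L:regdual}, and~\eqref{Eq:appox}.
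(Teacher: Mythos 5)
Your proposal is correct and follows essentially the same route as the paper: identify the weak-norm error with $\langle U-U_h,G\rangle$, apply the already-derived estimate \eqref{Eq:aux2}, bound $\|W\|_Y$ and $\|W-W_h\|_Y$ via the dual regularity of Lemma~\ref{L:regdual} together with the approximation properties of the finite element spaces, and conclude. Your explicit observation that for $p>3$ one has $p\dual<3/2$, so a Scott--Zhang type quasi-interpolant (rather than nodal Lagrange interpolation) is needed to approximate $W^2_{p\dual}(\Omega)$ functions in three dimensions, is a worthwhile detail that the paper leaves implicit under the phrase ``approximation properties of the finite element spaces.''
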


\begin{proof}
In view of the definition of the dual solution $W$ from Eq. \eqref{Eq:dual} and
the estimate Eq. \eqref{Eq:aux2}, take the infimum over
all $W_h\in Y_h$ in the latter estimate. The assertion then follows by the regularity
of $W$ from Lemma \ref{L:regdual} and the approximation properties of the finite element spaces.
\end{proof}

\section{Computational results}\label{Sec:comp}

For the computations presented in this section piecewise quadratic finite
elements are used for $\phi$ and $T$ as well as the 
${\cal P}_2\times {\cal P}_1$-Taylor-Hood element for $\uu$ and $p$.
In order to get an interesting computational example, where one can see the effect of 
thermophoresis, we slightly deviate from the set of boundary conditions imposed 
in the theoretical part. Set $\Omega =]0,2[\times ]0,1[$. For the concentration $\phi$
we impose homogeneous Neumann boundary conditions on the whole of $\partial\Omega$
and a mean concentration $\phi_m=0.1$. The temperature $T$ is set to $T=1$ at the left
side wall and $T=0$ at the right side wall. On the remaining parts of $\partial\Omega$
homogeneous Neumann boundary conditions are imposed. For the velocity $\uu$
homogeneous no-slip conditions are chosen except for the upper boundary, where a
slip condition is enforced, i.e. $\uu_2=0$ together with vanishing tangential
stress $\mu(\phi) (\partial_{x_2}\uu_1 + \partial_{x_1}\uu_2 )=0$. Note that this
condition can be realized as a natural boundary condition for the space
$\{ \vv \in \mathbf{H}^1(\Omega) \;|\; \vv_2 =0$ on the upper part of $\partial\Omega$,
$\vv =0$ else on $\partial\Omega\}$.

For the coefficients $\mu(\cdot), k(\cdot)$ we set
$$
\muu = 1 + 39.11\phi + 533.9\phi^2,\qquad \keff = 1 + 4.5503\phi,
$$
similar to those in \cite{Buon:06}, representing fittings from experimental data
for alumina $\text{Al}_2\text{O}_3$ particles.

Let us first consider the case without thermophoretic effects (i.e. $\phi\equiv\phi_m$).
The flow is driven by buoyancy
forces: the liquid heats up at the left lateral wall inducing an upward flow field that
turns to the right at the top of the container, transporting warm liquid to the right,
cold wall, where it cools down and flows downwards. The cold liquid is flowing back 
at the bottom of the container to the left hot wall, see Fig. \ref{Fig:Velocity}, left picture.

Switching on thermophoretic effects, the flow field is strongly enhanced on the upper
boundary. This can be understood by inspecting the concentration field, see Fig. \ref{Fig:Conc}.
The thermophoretic flux 
$\jj_{therm}=-\phi(1-\phi)\frac{1}{{\NBT}}\frac{\nabla T}{T_0}$ pushes concentration 
away from the left, hot and upper walls
(the flux is in direction from hot to cold), thus
decreasing the viscosity there. The opposite effect takes place at the right, cold wall:
concentration is pushed to the cold wall.

\begin{figure}[th]
\begin{center}
    \includegraphics[width=0.45 \textwidth]{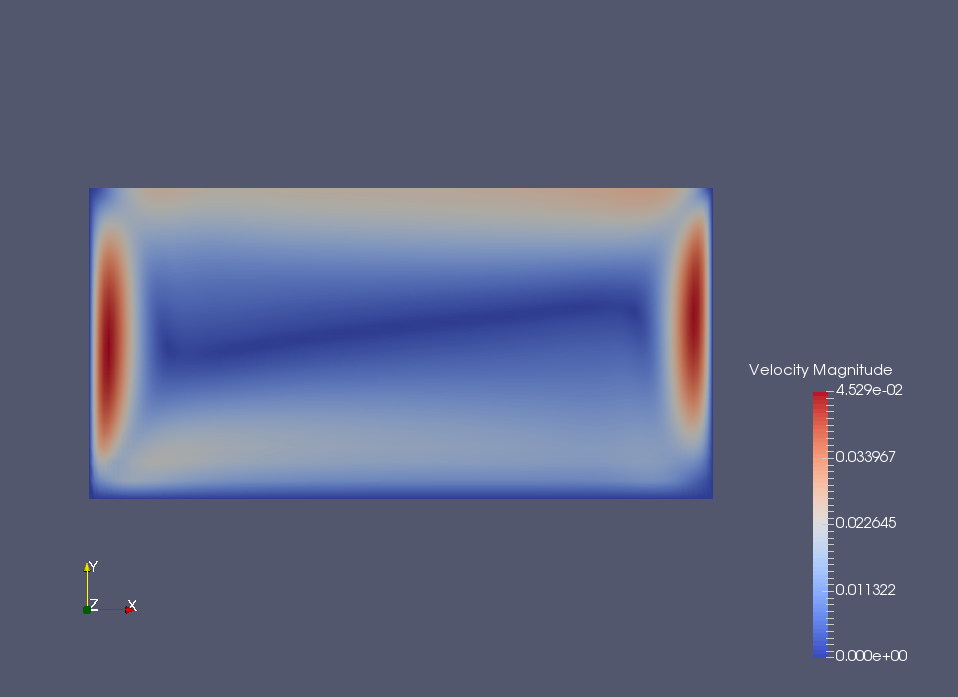}
    \hfil
    \includegraphics[width=0.45 \textwidth]{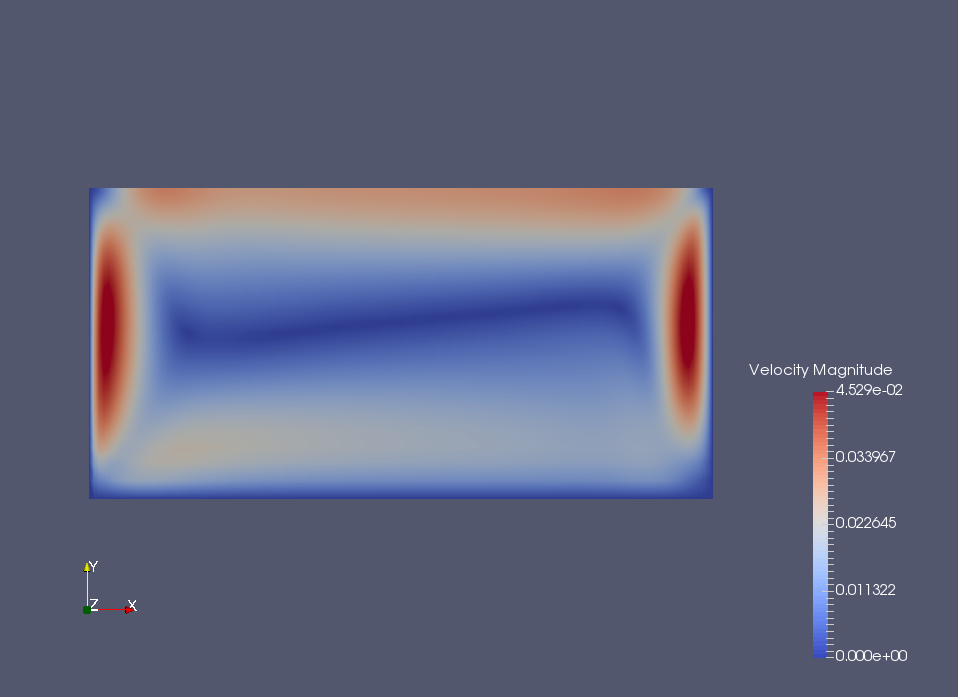}
  \caption{Magnitude of velocity without (left) and with (right)
         thermophoretic effect ($\Rey=700$, $\Pr =6$, $\NBT*T_0=0.586$,
         $\Sc = 1$, $\Scf = 1e10$, $\Le=-1e10$, $\phi_m=0.1$). 
         Switching on thermophoretic effects, the flow field is strongly enhanced on the upper
         boundary.}
\label{Fig:Velocity}
\end{center}
\end{figure}

In order to quantitatively assess the convergence, the same setting as above, however with
different parameters, is used. Since the exact solution is unknown, the computational
solution on a very fine grid with $nt=262,144$ triangles is used as reference instead. Starting from a coarse
triangulation, the grid is successively refined by two bisection steps each. The corresponding
errors and the experimental order of convergence (EOC) are listed in Tab. \ref{Tab:eoc}.
As expected one gets a convergence order of 3 (although the boundary of the domain is not of
class $C^3$).

\begin{figure}[th]
\begin{center}
    \includegraphics[width=0.45 \textwidth]{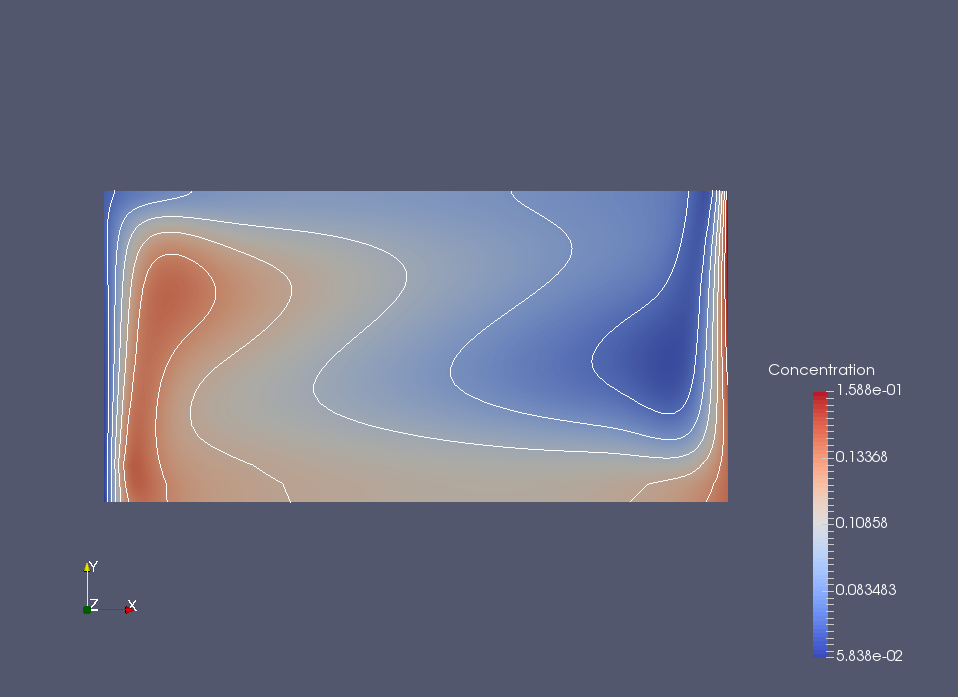}
  \caption{Concentration $\phi$ ($\Rey=700$, $\Pr =6$, $\NBT*T_0=0.586$, 
       $\Sc = 1$, $\Scf = 1e10$, $\Le=-1e10$). 
   The thermophoretic flux 
   $\jj_{therm}=-\phi(1-\phi)\frac{1}{{\NBT}}\frac{\nabla T}{T_0}$ pushes concentration 
   away from the left, hot and upper walls
   (the flux is in direction from hot to cold), thus
   decreasing the viscosity there. The opposite effect takes place at the right, cold wall:
   concentration is pushed to the cold wall.}
\label{Fig:Conc}
\end{center}
\end{figure}

\begin{table}
\begin{center}
{\small
\begin{tabular}{r|c|c|c|c|c|c}
$nt$ & $\|\phi-\phi_h\|_{0,6}$ & EOC & $\|T-T_h\|_{0,6}$ & EOC & $\|\uu-\uu_h\|_{0,2}$ & EOC \\
\hline
 256  &   3.0296e-04 &  ---  &  5.3766e-05 &  ---  &  2.1233e-04 &  ---        \\
1024  &   2.6758e-05 &  3.50 &  5.2109e-06 &  3.37 &  2.7126e-05 &  2.97 \\
4096  &   2.6659e-06 &  3.33 &  5.8438e-07 &  3.16 &  3.3971e-06 &  3.00 \\
16384 &   3.7459e-07 &  2.83 &  7.4084e-08 &  2.98 &  4.2396e-07 &  3.00 \\
\end{tabular}
}
\caption{Errors and EOCs; $nt$ = number of elements.
  $\Rey=100$, $\Pr =1$, $\NBT*T_0=0.586$, $\Sc = 1$, $\Scf = 1e4$, $\Le=1e4$, $\phi_m=0.1$.}
\label{Tab:eoc}
\end{center}
\end{table}
%
%

\section*{Acknowledgements}
Pedro Morin was partially supported by Agencia Nacional de Promoci\'on Cient\'ifica y Tecnol\'ogica, through grants PICT-2014-2522, PICT-2016-1983, by CONICET through PIP 2015 11220150100661, and by Universidad Nacional del Litoral through grants CAI+D 2016-50420150100022LI. A research stay at Universit\"at Erlangen was partially supported by the Simons Foundation and by the Mathematisches Forschungsinstitut Oberwolfach as well as by the DFG--RTG
2339 \emph{IntComSin}.

\def\cprime{$'$} \def\cprime{$'$} \def\cprime{$'$} \def\cprime{$'$}
  \def\cprime{$'$} \def\cprime{$'$} \def\cprime{$'$} \def\cprime{$'$}

\bibliography{literature}
\bibliographystyle{plain}

\vfill

{\small
\begin{minipage}[t]{0.45\textwidth}
\noindent
Eberhard B\"ansch\\
Applied Mathematics III\\
University Erlangen--N\"urnberg\\
Cauerstr. 11\\
91058 Erlangen\\
Germany\\
{\tt baensch@math.fau.de}
\end{minipage}\hfill
}
\\[1cm]
{\small
	\begin{minipage}[t]{0.45\textwidth}
		\noindent
		Pedro Morin\\
		Facultad de Ingenier\'ia Qu\'imica\\
		Universidad Nacional del Litoral and CONICET \\
		Santiago del Estero 2829 \\
		S3000AOM Santa Fe \\
		Argentina\\
		{\tt pmorin@fiq.unl.edu.ar}
	\end{minipage}\hfill
}

\end{document}